\newcommand{\injects}{\hookrightarrow}
\newtheorem{Theorem}{Theorem}[section]
\newtheorem{Lemma}[Theorem]{Lemma}
\newtheorem{Corollary}[Theorem]{Corollary}
\newtheorem{Proposition}[Theorem]{Proposition}
\theoremstyle{definition}
\newtheorem{Remark}[Theorem]{Remark}
\newtheorem{Example}[Theorem]{Example}
\def\sqr#1#2{{\vcenter{\hrule height.#2pt
			\hbox{\vrule width.#2pt height#1pt \kern#1pt
				\vrule width.#2pt}
			\hrule height.#2pt}}}
\def\phi{\varphi}
\def\gr#1#2{{\rm gr}\, _{#1}(#2)}
\def\gr{{\rm gr}\,}
\def\depth{{\rm depth}\,}
\def\Min{{\rm Min}\,}
\def\codim{{\rm codim}\,}
\def\grade{{\rm grade}\,}
\def\rk{\rm rank}
\def\Ext#1#2#3#4{{\rm Ext}\,^{#1}_{#2}({#3},{#4})}
\def\proj#1{{\rm Proj}\, (#1)}
\def\supp#1{{\rm Supp}\, (#1)}
\def\ini{\mbox{\rm in}}
\def\cl#1{{\mathcal #1}}
\def\phi{\varphi}
\def\grade{{\rm grade}\,}
\def\gg{{\bf g}}
\def\gg{{\bf g}}
\def\cl#1{{\cal #1}}
\def\rk{\rm rank}
\def\GG{{\mathbb G}}
\newcommand{\excise}[1]{}
 \def\NZQ{\mathbb}               % the font for N,Z,Q,R,C
 \def\PP{{\NZQ P}}
 \def\GG{{\NZQ G}}
 \def\G{{\mathcal G}}
 \def\opn#1#2{\def#1{\operatorname{#2}}} % to make operators
 \opn\chara{char} \opn\length{\ell} \opn\pd{pd} \opn\rk{rk}
 \opn\projdim{proj\,dim} \opn\injdim{inj\,dim} \opn\rank{rank}
 \opn\depth{depth} \opn\grade{grade} \opn\height{height}
 \opn\embdim{emb\,dim} \opn\codim{codim}
 \opn\Tr{Tr} \opn\bigrank{big\,rank}
 \opn\superheight{superheight}\opn\lcm{lcm}
 \opn\trdeg{tr\,deg}%\emph{
 \opn\reg{reg} \opn\lreg{lreg} \opn\ini{in} \opn\lpd{lpd}
 \opn\size{size} \opn\sdepth{sdepth}
 \opn\link{link}\opn\fdepth{fdepth}\opn\lex{lex}
 \opn\tr{tr}
 \opn\type{type}
 \opn\div{div} \opn\Div{Div} \opn\cl{cl} \opn\Cl{Cl}
 \opn\Spec{Spec} \opn\Supp{Supp} \opn\supp{supp} \opn\Sing{Sing}
 \opn\Ass{Ass} \opn\Min{Min}\opn\Mon{Mon}
 \opn\Ann{Ann} \opn\Rad{Rad} \opn\Soc{Soc}
 \opn\Im{Im} \opn\Ker{Ker} \opn\Coker{Coker} \opn\Am{Am}
 \opn\Hom{Hom} \opn\Tor{Tor} \opn\Ext{Ext} \opn\End{End}
 \opn\Aut{Aut} \opn\id{id}
 \opn\nat{nat}
 \opn\pff{pf}%   \pf exists already
 \opn\Pf{Pf} \opn\GL{GL} \opn\SL{SL} \opn\mod{mod} \opn\ord{ord}
 \opn\Gin{Gin} \opn\Hilb{Hilb}\opn\sort{sort}
 \opn\PF{PF}\opn\Ap{Ap}
 \opn\aff{aff} \opn
\opn\relint{relint} \opn\st{st}
 \opn\lk{lk} \opn\cn{cn} \opn\core{core} \opn\vol{vol}  \opn\inp{inp} \opn\nilpot{nilpot}
 \opn\link{link} \opn\star{star}\opn\lex{lex}\opn\set{set}
 \opn\width{wd}
 \opn\Fr{F}
 \opn\QF{QF}
 \opn\G{G}
 \opn\type{type}\opn\res{res}
 \opn\log{Log}
 \opn\gr{gr}
 \def\pot#1#2{#1[\kern-0.28ex[#2]\kern-0.28ex]}
 \opn\dirlim{\underrightarrow{\lim}}
 \opn\inivlim{\underleftarrow{\lim}}
 \let\sect=\cap
\begin{document}
	%\begin{titlepage}

\title{On the Gauss algebra of  toric algebras }

\author{J\"{u}rgen Herzog}
\address{J\"urgen Herzog, Fachbereich Mathematik, Universit\"at Duisburg-Essen, Campus Essen, 45117
Essen, Germany} \email{juergen.herzog@uni-essen.de}

\author{Raheleh Jafari}
\address{Raheleh Jafari, Mosaheb Institute of Mathematics, Kharazmi University,  and School of Mathematics, Institute for Research in Fundamental Sciences (IPM), P.O. Box 19395-5746,	Tehran, Iran.}
\email{rjafari@khu.ac.ir}

\author{Abbas Nasrollah Nejad}
\address{Abbas Nasrollah Nejad, Department of Mathematics, Institute for Advanced Studies in Basic Sciences (IASBS), Zanjan 45137-66731,
	Iran}
\email{abbasnn@iasbs.ac.ir}

\thanks{Essential parts of the paper were written while the authors visited the Mathematische Forschungsinstitut in Oberwolfach in the frame of the ``Research in Pairs" program. We thank the institute  for its generous  support.\\ The second author was in part supported by a grant from IPM (No. 96130112).}

\subjclass[2010]{13C15, 14M25,  05E40,  05C50, 14E05}   	

\keywords{Gauss map, Gauss algebra, Birational morphism, Borel fixed algebra, Squarefree Veronese algebra, Edge ring}

\maketitle

\begin{abstract}
Let $A$ be a $K$-subalgebra of the  polynomial ring $S=K[x_1,\ldots,x_d]$ of dimension $d$, generated by finitely many monomials  of degree $r$.   Then the Gauss algebra $\mathbb{G}(A)$  of $A$ is generated by monomials  of degree $(r-1)d$ in $S$. We describe the generators and the structure of $\mathbb{G}(A)$, when $A$ is a Borel fixed algebra, a squarefree Veronese algebra,  generated in degree $2$, or the edge ring of a bipartite graph with at least   one loop.  For a bipartite graph  $G$  with one loop, the embedding dimension of $\mathbb{G}(A)$ is bounded by the complexity of the graph $G$.
\end{abstract}

\section*{Introduction}

Let $V\subseteq \PP_K^{n-1}$ be a projective variety of dimension  {$d-1$} over an algebraically closed field $K$ of characteristic zero. Denote by  $V_{\mathrm{sm}}$ the set of non-singular
points of $V$ and by $\GG( {d-1},n-1)$ the Grassmannian of  {$d-1$}-planes in $ {\PP_K^{n-1}}$.
The \textit{ Gauss map}  of $V$ is the morphism   $$\gamma: V_{\mathrm{sm}}\longrightarrow \GG( {d-1},n-1),$$
which sends each point $p\in V_{\mathrm{sm}}$ to
the embedded tangent space $\mathrm{T}_{p}V  $ of $V$ at the point $p$. The closure of the image of $\gamma$ in $\GG( {d-1},n-1)$ is called the \textit{Gauss image} of
$V$, or \textit{the variety of tangent planes} and  is denoted by $\gamma(V)$. The homogeneous coordinate ring of $\gamma(V)$  in the Pl\"{u}cker embedding of the Grassmannian $\mathbb{G}( {d-1},n-1)$ of $ {(d-1)}$-planes is called the \textit{Gauss algebra} of $V$. The Gauss map is a classical subject in algebraic geometry  and has been studied by many authors. For example, it is  known that the  Gauss map of a smooth projective variety is finite \cite{GH,Zak}; in particular, a smooth variety and its Gauss image have the same dimension with the obvious exception of a linear space. Zak \cite[Corollary 2.8]{Zak}  showed that, provided $V$ is not a linear subvariety of $\PP_K^n$, the dimension of the Gauss image satisfies the inequality   $\dim V-\dim  \Sing(V)-1\leq \dim \gamma(V)\leq \dim V$, where $\Sing(V)$ denotes the singular locus of $V$.  For an algebraic proof of Zak's inequality,  see \cite{AKB}.

We take up the situation where $V\subset \PP_K^{n-1}$ is a unirational variety. To elaborate  on the algebraic side of the picture, consider the polynomial ring
$S=K[x_1,\ldots,x_d]$.  Let  $\gg=g_1,\ldots,g_n$ be a sequence of non-constant homogeneous polynomials
of the same degree
 in $S$ generating  the $K$-subalgebra $A=K[\gg]\subseteq S$  of dimension  $d$.  Then  the Jacobian matrix $\Theta(\gg)$ of $\gg$ has rank $d$ \cite[Proposition 1.1]{Aron1}. In this situation we define the  \textit{Gauss algebra} associated with $\gg$ as the
$K$-subalgebra generated by the set of $d\times d$ minors of $\Theta(\gg)$ \cite[Definition
2.1]{BGS}.  Since the definition does not depend on the choice of the  homogeneous generators of $A$, we simply denote the Gauss algebra associated with $\gg$, by $\GG(A)$, and call it the Gauss algebra of $A$.
The Gauss algebra  $\mathbb{G}(A)$ is isomorphic to the coordinate ring of  the Gauss image of the projective variety  defined
parametrically by $\gg$ in the Pl\"{u}cker embedding of the Grassmannian $\mathbb{G}( {d-1},n-1)$ of  {$d-1$}-planes.  Moreover, there is an injective homomorphism of
$K$-algebras $\GG(A)\injects A$ inducing the rational map from $\proj{A}$ to its Gauss image \cite[Lemma 2.3]{BGS}.

In this paper, we study the  Gauss algebra of  toric algebras.
If $A\subset S$ is a
toric algebra with monomial generators $\gg=g_1,\ldots,g_n$
of the same degree,   then all  minors of $\Theta(\gg)$ are monomials. In
particular, the Gauss algebra is a toric algebra. For example, it has been shown that the   Gauss algebra of a Veronese algebra is again  Veronese \cite[Proposition 3.2]{BGS}.
Veronese algebras are special cases  of a more general  class of algebras,   namely the class of Borel fixed algebras. As a generalization of the above mentioned result, we show that the Gauss algebra of any Borel fixed algebra is again Borel fixed, see Theorem~\ref{Borel_fix}. This approach provides a simple proof for
\cite[Proposition 3.2]{BGS}.
Veronese algebras are actually principal Borel fixed algebras, that is, the Borel set defining  the algebra  admits precisely one Borel generator. In general the number of Borel generators of the Borel fixed algebra $A$ and that of $\GG(A)$ may be different. However, in Theorem~\ref{principal} we show that the Gauss algebra of a principal Borel fixed algebra is again principal. This has the nice consequence that the  Gauss algebra of a principal Borel fixed algebra is a normal Cohen-Macaulay domain, and its defining ideal is generated by quadrics. Note that in general the property of $A$ being normal  does not imply that $\GG(A)$ is normal, and vice versa (Example~\ref{non-nomral} and Theorem~\ref{2-Veronese}(d)).

 The Gauss algebra of a squarefree Veronese algebra is much harder to understand. We can give a full description of  $\GG(A)$, when $A$ is a  squarefree Veronese algebra generated in degree $2$.  In  Theorem~\ref{2-Veronese} we show that $\GG(A)$ is defined by all monomials $u$ of degree $d$ and $|\supp(u)|\geq 3$, provided $d\geq 5$. Algebras of this type may be viewed as the base ring of a polymatroid.  In particular,  $\GG(A)$ is normal and Cohen--Macaulay. However, $\GG(A)$ is not normal for $d=4$. Yet for any $d$,  the Gauss map $\gamma: \proj{A}\dashrightarrow \proj{\GG(A)}$ is birational.

In the last section of this paper we study the Gauss algebra of the edge ring of a finite graph. Let $G$ be a loop-less connected graph with $d$ vertices.  It is well-known that the dimension of  the edge ring  $A=K[G]$ of $G$  is  $d$, if $G$ is not bipartite, and is $d-1$ if $G$  is bipartite. In our setting, $\GG(A)$ is defined under the assumption that $\dim A=d$. By using a well-known theorem \cite{GKS} of graph theory,  the generators of $\GG(A)$,  when $G$ is not bipartite, correspond to $d$-sets $E$ of edges of $G$, satisfying the property that the subgraph with edges $E$ has an odd cycle in each of its connected components. In the bipartite case we form the graph $G^L$,  where $L$ is a non-empty subset of the vertex set of $G$, by adding a loop to $G$  for each vertex   in $L$. Then $A=K[G^L]$ has dimension $d$, and there is bijective  map from  the set of pairs $(V,T)$ to the set of monomial  generators of  $\GG(A)$, where $V$ is a non-empty subset of $L$  and $T$ is a set of edges which form  a spanning forest  $G(T)$ of $G$  with the property that   each connected component of $G(T)$ contains exactly one vertex of $V$. From this description it follows that if $|L|=1$,  then the embedding dimension of the Gauss algebra  is bounded by  the complexity of the graph, which by definition, is  the number of spanning trees of the graph. This is an important graph invariant. The number of spanning trees provides a measure for the global reliability of a network. For a complete bipartite graph $K_{m;n}$ the embedding dimension of the Gauss algebra is $\binom{n+m-2}{n-1}\binom{n+m-2}{m-1}$, see Example~\ref{bipartite}, while  the number of spanning tress is $m^{n-1}n^{m-1}$, see for instance \cite[Theorem 1]{HW}. In general the defining ideal of the Gauss algebra admits many binomial generators. Thus it is not surprising that the Gauss algebra is rarely a hypersurface ring. This is for example the case,  when $G$ is a cycle with one loop or a path graph with two loops attached. The Gauss algebra of an odd (resp.\  even) cycle of length $d$ with one loop attached is a hypersurface ring of dimension $d$ (resp.\ $d-1$). More generally, we expect that if  $G$ is a bipartite graph on $[d]$, $L=\{i\}$ and $A$ be the edge ring of $G^L$,  then $\GG(A)$ is a hypersurface ring of dimension $d-1$, if and only if $G$ is an  even cycle.

\section{Toric algebras}

In this section, we collect some basic fact {s} about the Gauss algebra of a toric algebra.
Let $S=K[x_1,\ldots,x_d]$ be a polynomial ring  over $K$, where $K$ is a field of characteristic  zero. Let $\gg=g_1,\ldots,g_n$ be a sequence of monomials with $g_i=  {x_1^{a_{1i}}\cdots x_d^{a_{di}}}$ for $i=1,\ldots,n$.
%We consider the integer vectors  $a_j=(a_{j1},\ldots,a_{jd})\in \mathbb{Z}_+^{d+1}$.
We associate to the sequence
$\gg$ two matrices, namely  $\Theta(\gg)$ and $\mathrm{Log}(\gg)$,  where $\Theta(\gg)$ is the Jacobian matrix of $\gg$ and  $\mathrm{Log}(\gg)=(a_{ij})$ is the exponent
matrix (or log-matrix ) of $\gg$, whose columns  are the exponent vectors of the  monomials in $\gg$. We denote the  $r$-minor
\[
\det
\begin{bmatrix}
\frac{\partial g_{i_1}}{\partial x_{j_1}} &  \cdots &  \frac{\partial g_{i_1}}{\partial x_{j_r}}  \\
\vdots & \ddots & \vdots \\
\frac{\partial g_{i_r}}{\partial x_{j_1}} & \cdots &  \frac{\partial g_{i_r}}{\partial x_{j_r}}\\
\end{bmatrix}=
\det \begin{bmatrix}
a_{i_1j_1}\frac{g_{i_1}}{x_{j_1}} &\cdots & a_{i_1j_r}\frac{g_{i_1}}{x_{j_r}} \\
\vdots & \ddots & \vdots \\
a_{i_rj_1}\frac{g_{i_r}}{x_{j_1}} &\cdots & a_{i_rj_r}\frac{g_{i_r}}{x_{j_r}} \\
\end{bmatrix}
\]
by $[i_1,\ldots,i_r\ | \ j_1,\ldots,j_r ]_{\Theta(\gg)}$.

\medskip
The multi-linearity property of the determinant implies that
\[x_{j_1}\ldots x_{j_r} [i_1,\ldots,i_r\ | \ j_1,\ldots,j_r ]_{\Theta(\gg)}=g_{i_1}\ldots g_{i_r}[i_1,\ldots,i_r\ | \ j_1,\ldots,j_r ],\]
where  {$[i_1,\ldots,i_r\ | \ j_1,\ldots,j_r ]$ is the $r$-minor corresponding to the rows $i_1,\ldots,i_r$ and columns $j_1,\ldots,j_r$ of the transpose} of $\mathrm{Log}(\gg)$. Therefore,  $r$-minors
of $\Theta(\gg)$ are monomials of the form
\begin{equation}\label{minor-monomail}
[i_1,\ldots,i_r\ | \ j_1,\ldots,j_r ]\cdotp\frac{g_{i_1}\ldots g_{i_r}}{x_{j_1}\ldots x_{j_r}}.
\end{equation}
 By the relation (\ref{minor-monomail}), the Jacobian matrix  and the log-matrix of $\gg$ have the same rank (see also \cite[Proposition~1.2]{Simis-1998}).

Let $A=K[\gg]$ be the toric $K$-algebra with generators $\gg=g_1,\ldots,g_n$.
It is well know {n} that the dimension of $A$ is the rank of the  matrix $\log(\gg)$. Thus, if all monomials of $\gg$ are of  degree $r$ and the rank of  $\log({\gg})$ is $d$,  then the Gauss algebra $\GG(A)$ of $A$ is a toric algebra generated by monomials of degree $(r-1)d$.
Then  (\ref{minor-monomail}) implies that
\[\GG(A)=K[(g_{i_1}\cdots g_{i_d})/(x_1\cdots x_d) :\
 \ \det(\mathrm{Log}(g_{i_1},\ldots, g_{i_d}))\neq 0].
\]
The injective $K$-algebra homomorphism $ \GG(A)\hookrightarrow A$ is defined by multiplying each generator  of $\GG(A)$ by $x_1\cdots x_d$. Therefore,
\[
\GG(A)\simeq K[g_{i_1}\cdots g_{i_d} :\  \det(\mathrm{Log}(g_{i_1},\ldots, g_{i_d}))\neq 0]\subseteq A.
\]
The morphism $\GG(A)\hookrightarrow A$ induces the rational Gauss map
\[\gamma: \proj{A}\dashrightarrow \proj{\GG(A)} .\]
\begin{Remark}\label{birational}\rm
	Let $A=K[g_1,\ldots, g_n]$ be a standard graded
$K$-subalgebra of $K[x_1,\ldots,x_d]$, up to degree renormalization, and $X=\proj{A}$. Since
\[
X=\bigcup_{i=1}^n \mathrm{Spec}(K[g_1/g_i,\ldots,g_n/g_i]),
\]
	it follows that the field $ K(X)$ of rational functions of $X$  is equal to  the field of fractions  of any of the  algebras
$K[g_1/g_i,\ldots,g_n/g_i]$.

Let  $B\subset A$ be an extension of homogeneous  standard graded algebras, and $X=\proj{A}$ and $Y=\proj{B}$. Let $A$ be a domain. Then
the corresponding dominant rational map $X \dashrightarrow Y$ is birational if and only if $K(X)=K(Y)$.
	
Therefore if $A=K[g_1,\ldots,g_n]$ is the toric algebra as above, then the morphism $\gamma:\proj{A} \dashrightarrow\proj{\GG(A)}$ is birational if and only if for all $i<j$, the fractions $g_i/g_j$ can be expressed as a product of fractions of the form  $(g_{i_1}\cdots g_{i_d})/(g_{j_1}\cdots g_{j_d})$ with $\det(\mathrm{Log}(g_{i_1},\ldots, g_{i_d}))\neq 0$ and $\det(\mathrm{Log}(g_{j_1},\ldots, g_{j_d}))\neq 0$.

For example,  $\gamma: \proj{A} \dashrightarrow\proj{\GG(A)}$ is birational, when $A\subseteq k[s,t]$  is the coordinate ring of the projective monomial curve parametrized by the generators of $A$~\cite[Proposition 3.8]{BGS}.

\end{Remark}
In general,  normality, Cohen-Macaulayness  or other homological or algebraic properties   are  not preserved when  passing from $A$ to $\GG(A)$.  For example, the squarefree $r$-Veronese algebra $A=K[V_{r,d}]$ is normal Cohen-Macaulay, while  for $r=2$ the Gauss algebra $\GG(A)$ is normal and Cohen--Macaulay if and only if $d\geq 5$, see  Theorem~\ref{Veronese}.

The following example shows that the Gauss algebra of a non-normal toric algebra may be   normal.

\begin{Example}\label{non-nomral}
Let $A=K[s^6, s^5t, s^4t^2,s^3t^3, t^6]\subset K[s,t]$ be the homogeneous coordinate ring of the  projective monomial curve embedded in $\PP_K^4$. By \cite[Lemma 3.7]{BGS},  the $K$-algebra  $A$ is not an isolated singularity and hence is not normal. However, the Gauss algebra  $\GG(A)$ is the $8$-Veronese algebra  in $k[t,s]$, which is normal,  Cohen-Macaulay and  an isolated singularity.
\end{Example}

\section{Borel-fixed algebras}

We start with the following lemma which is crucial for the kind of algebras studied in this section.

\begin{Lemma}\label{Tran-det}
Let $g_1,\ldots,g_d\in S = K[x_1,\ldots,x_d]$  be  homogeneous polynomials,  and let $\varphi: S\to S$ be a linear automorphism. Then
\[\det(\Theta(\varphi(g_1),\ldots, \varphi(g_d)) {)}=\det (\varphi)\cdot\varphi(\det(\Theta(g_1,\ldots,g_d)) {)}.\]
\end{Lemma}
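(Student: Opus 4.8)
The plan is to show that $\Theta(\varphi(g_1),\dots,\varphi(g_d))$ differs from the entrywise image $\varphi(\Theta(g_1,\dots,g_d))$ only by right multiplication with the coefficient matrix of $\varphi$, and then to pass to determinants. First I would fix notation: since $\varphi\colon S\to S$ is a linear automorphism, it is the unique $K$-algebra automorphism with $\varphi(x_j)=\sum_{k=1}^{d}c_{kj}x_k$ for an invertible matrix $C=(c_{kj})\in\GL_d(K)$, and $\det(\varphi)$ denotes $\det C$. Because $\varphi$ is a $K$-algebra homomorphism, $\varphi(f)=f\bigl(\varphi(x_1),\dots,\varphi(x_d)\bigr)$ for every $f\in S$; in particular, for each $k$ the polynomial $\varphi(\partial g_i/\partial x_k)$ is obtained from $\partial g_i/\partial x_k$ by this same substitution.

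Next I would invoke the chain rule for formal partial derivatives — a purely algebraic identity valid over any commutative ring, so no hypothesis on $\operatorname{char} K$ enters at this point. It gives, for all $1\le i,j\le d$,
\[
\frac{\partial\,\varphi(g_i)}{\partial x_j}
=\sum_{k=1}^{d}\varphi\!\left(\frac{\partial g_i}{\partial x_k}\right)\frac{\partial\,\varphi(x_k)}{\partial x_j}
=\sum_{k=1}^{d}\varphi\!\left(\frac{\partial g_i}{\partial x_k}\right)c_{jk}.
\]
Letting $\varphi$ act entrywise on matrices over $S$, this is precisely the matrix identity
\[
\Theta\bigl(\varphi(g_1),\dots,\varphi(g_d)\bigr)=\varphi\bigl(\Theta(g_1,\dots,g_d)\bigr)\cdot C^{\mathrm t}.
\]

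Finally I would take determinants of both sides. Since $\varphi$ is a ring homomorphism, it commutes with the determinant of a square matrix over $S$ (the determinant being a fixed integer polynomial in the entries), so $\det\varphi\bigl(\Theta(g_1,\dots,g_d)\bigr)=\varphi\bigl(\det\Theta(g_1,\dots,g_d)\bigr)$; combining this with the multiplicativity of $\det$ and with $\det C^{\mathrm t}=\det C=\det(\varphi)$ yields the asserted formula. I do not expect a genuine obstacle: once set up, the argument is only a few lines. The two points that deserve care are the index bookkeeping in the chain-rule step — one must be sure the constant matrix emerging on the right is (the transpose of) $C$ rather than, say, $C^{-1}$ — and the observation that applying a ring homomorphism commutes with forming determinants; both are routine.
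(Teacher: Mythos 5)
Your proof is correct and follows essentially the same route as the paper's: the formal chain rule gives $\frac{\partial\,\varphi(g_i)}{\partial x_j}=\sum_k \varphi\bigl(\frac{\partial g_i}{\partial x_k}\bigr)c_{jk}$, which is exactly the factorization of $\Theta(\varphi(g_1),\ldots,\varphi(g_d))$ as the entrywise image $\varphi(\Theta(g_1,\ldots,g_d))$ times the (transposed) coefficient matrix of $\varphi$, after which multiplicativity of the determinant and the fact that the ring homomorphism $\varphi$ commutes with determinants finish the argument, just as in the paper (up to transposing the Jacobian convention). Your indexing and the identification $\det C^{\mathrm t}=\det C=\det(\varphi)$ are handled correctly, so there is nothing to add.
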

\begin{proof}
Consider the linear transformation $\varphi(x_i)=\sum_{j=1}^{d}a_{ji}x_j$, $i=1,\ldots,d$.  For polynomial $g\in  K[x_1,\ldots,x_d] $ a direct computation with
derivatives shows that
\[
\dfrac{\partial \varphi (g)}{\partial x_i}=a_{i1}
\varphi(\dfrac{\partial g}{\partial x_1})+\cdots+ a_{id}
\varphi(\dfrac{\partial g}{\partial x_d}).
\] 	
We have
\begin{eqnarray}
\nonumber \det(\Theta(\varphi(g_1), \ldots, \varphi(g_{ {d}}))) &=& \det\left(
\begin{bmatrix}
a_{11}& \cdots & a_{1d}\\
\vdots & \ddots & \vdots\\
a_{d1}& \cdots & a_{dd}
\end{bmatrix}
\begin{bmatrix}
\varphi(\dfrac{\partial g_1}{\partial x_1})& \cdots & \varphi(\dfrac{\partial g_d}{\partial x_1})\\
\vdots & \ddots & \vdots\\
\varphi(\dfrac{\partial g_1}{\partial x_d})& \cdots & \varphi(\dfrac{\partial g_d}{\partial x_d})
\end{bmatrix}\right)\\
\nonumber \\
\nonumber&=& \det(\varphi).\det(\varphi(\Theta(g_1,\ldots,g_d)))\\
\nonumber &=&  \det(\varphi).\varphi(\det((\Theta(g_1,\ldots,g_d)))).
\end{eqnarray}
\end{proof}

Recall that a set $G=\{g_1,\ldots,g_n\}$ of monomials of the same degree in $K[x_1,\ldots,x_d]$ is called   {\em Borel set}, if the monomial ideal generated by $G$ is fixed under the action of all linear automorphisms
$\varphi: S\to S$ defined by  nonsingular  upper triangular matrices. The ideal generated by a Borel set, is called a Borel-fixed ideal.

  If $\chara(K)=0$, as we always assume in this paper, the Borel-fixed ideals  are just the
strongly stable  monomial ideals, that is, the monomial ideals $I$   with the property that $x_i(u/x_j)\in I$ for the all monomial generators $u$ of  $I$, and all integers $i<j$ such
that $x_j$ divides $u$.
Let $B\subseteq G$. Then the elements of $B$ are called {\em Borel generators} of $G$, if $G$ is the smallest Borel set containing $B$. In this case if $B=\{u_1,\ldots,u_t\}$, we write
$G=\langle u_1,\ldots,u_t\rangle$. For instance, the Borel set  generated by $\{x_1x_3, x_2x_4\}$ is $\langle x_1x_3, x_2x_4\rangle=\{x_1^2,x_1x_2,x_1x_3,x_1x_4,x_2^2,x_2x_3,x_2x_4\}$.
 A Borel set $G$ is called  {\em principal} if  there exists $u\in G$ such that $G=\langle u\rangle$.

Let $G$  be a  Borel set of monomials of degree $r$. The Borel generators of $G$ are characterized by the property that they are maximal among the monomials
of $G$ with respect to the following partial order on  the monomials: let $u= x_{i_1}x_{i_2}\ldots x_{i_r}$ and $v=x_{j_1}x_{j_2}\ldots x_{j_r}$ with $i_1\leq
i_2\leq \ldots \leq  i_r$ and $j_1\leq j_2\leq \ldots \leq  j_r$. Then we set $u\prec v$,  if $i_k\leq j_k$ for $k=1,\ldots,r$. In particular, if $v=x_{i_1}^{c_1}\cdots x_{i_r}^{c_r}$ with $c_i>0$, and $u=x_1^{a_1}\cdots x_d^{a_d}$. Then $u\npreceq v$ if and only if there exists $j$, such that
\begin{eqnarray}\label{borel-order}
a_{i_j+1}+\cdots+a_d\geq c_{j+1}+\cdots+c_r+1.
\end{eqnarray}
	
	Let $G=\{g_1,\ldots,g_n\}\subset K[x_1,\ldots,x_d]$ be a Borel set. Then we call $A=K[g_1,\ldots,g_n]$, a {\em Borel fixed algebra}, if $\dim(A)=d$.
Note that $\dim(A)=d$, if and only if  there exists $j$ such that $x_d|g_j$. Indeed, since $G$ is a Borel set, the  condition implies that $\{x_1^r,x_1^{r-1}x_2,\ldots,x_1^{r-1}x_d\}\subseteq G$, where $r$ is the degree of the monomials in $G$. The  log-matrix of these elements is upper triangular, and so has rank  $d$. This shows that $\dim(A)=d$.
Indeed, $A$ is isomorphic to the polynomial ring $K[x_1,\ldots,x_d]$ by multiplication by $1/x^{r-1}$.

\begin{Theorem}\label{Borel_fix}
 The Gauss algebra of a   Borel-fixed algebra is a   Borel-fixed algebra.
\end{Theorem}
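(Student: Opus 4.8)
The plan is to work from the explicit description of the Gauss algebra derived earlier. Write $\mathrm{Log}(g_{i_1},\ldots,g_{i_d})$ for the $d\times d$ exponent matrix of a choice of $d$ generators, $\Theta(g_{i_1},\ldots,g_{i_d})$ for their $d\times d$ Jacobian, and set $e=(r-1)d$. By \eqref{minor-monomail} each minor $\det\Theta(g_{i_1},\ldots,g_{i_d})$ is a monomial of degree $e$, and it is nonzero exactly when $\det\mathrm{Log}(g_{i_1},\ldots,g_{i_d})\neq 0$; hence $\GG(A)=K[H]$, where $H$ is the set of these nonzero monomial minors taken over all $d$-subsets $\{i_1<\cdots<i_d\}$. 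So it suffices to prove that $H$ is a Borel set and then to identify the polynomial ring in which $K[H]$ has full Krull dimension.

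First I would record the elementary fact that, for a set $H$ of monomials all of the same degree $e$, being a Borel set is equivalent to $\mathrm{span}_K H$ being stable under every linear automorphism $\varphi$ of $S$ given by a nonsingular upper triangular matrix. Indeed $\mathrm{span}_K H$ is the degree-$e$ piece of the monomial ideal $(H)$; a degree-preserving automorphism $\varphi$ of $S$ fixes $(H)$ if and only if it fixes this piece, since $(H)_{e'}=S_{e'-e}\cdot\mathrm{span}_K H$ and $\varphi(S_{e'-e})=S_{e'-e}$ for all $e'$. Moreover a one-sided inclusion $\varphi(\mathrm{span}_K H)\subseteq\mathrm{span}_K H$ valid for all such $\varphi$ upgrades to equality, upon applying it also to $\varphi^{-1}$.

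The core of the argument is then to verify $\varphi(\mathrm{span}_K H)\subseteq\mathrm{span}_K H$ for every upper triangular $\varphi$. Since $A$ is Borel fixed, the ideal $(g_1,\ldots,g_n)$ is $\varphi$-stable, and comparing degree-$r$ components gives $\varphi(g_i)=\sum_{l=1}^n c_{il}\,g_l$ with $c_{il}\in K$. For a $d$-subset indexing an element of $H$, Lemma~\ref{Tran-det} yields
\[
\varphi\bigl(\det\Theta(g_{i_1},\ldots,g_{i_d})\bigr)=\det(\varphi)^{-1}\det\Theta\bigl(\varphi(g_{i_1}),\ldots,\varphi(g_{i_d})\bigr),
\]
and, because $g\mapsto\Theta(g)$ is $K$-linear in $g$ and the determinant is linear in each row, $\det\Theta(-,\ldots,-)$ is multilinear in its $d$ polynomial arguments. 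Expanding and discarding the terms in which some index is repeated — these give a matrix with two equal rows, hence vanish — we obtain
\[
\det\Theta\bigl(\varphi(g_{i_1}),\ldots,\varphi(g_{i_d})\bigr)=\sum c_{i_1l_1}\cdots c_{i_dl_d}\,\det\Theta(g_{l_1},\ldots,g_{l_d}),
\]
summed over $d$-tuples of pairwise distinct indices; after sorting indices each surviving summand is $\pm$ an element of $H$ (or is $0$, when the corresponding $\mathrm{Log}$-minor vanishes). Hence $\varphi\bigl(\det\Theta(g_{i_1},\ldots,g_{i_d})\bigr)\in\mathrm{span}_K H$, so $H$ is a Borel set.

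Finally, since $H$ is Borel, the set of variables dividing some element of $H$ is downward closed, say $\{x_1,\ldots,x_m\}$, so $H$ is a Borel set of degree-$e$ monomials in $T=K[x_1,\ldots,x_m]$; Borel-ness forces $x_1^{e},x_1^{e-1}x_2,\ldots,x_1^{e-1}x_m\in H$, whose exponent matrix is upper triangular with nonzero diagonal, whence $\dim K[H]=m$. Thus $\GG(A)$ is a Borel fixed algebra in $T$ (with $m=d$ in the principal cases of interest, such as the Veronese algebras, recovering \cite[Proposition 3.2]{BGS}). I expect the only points needing care to be the bookkeeping in the multilinear expansion — matching each surviving summand, up to sign, with a member of $H$ — and the remark that $\GG(A)$ may a priori involve fewer than $d$ of the variables $x_1,\ldots,x_d$, which is exactly what forces the passage to $T$; neither is a genuine obstacle.
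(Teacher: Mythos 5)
Your proof is correct and follows essentially the same route as the paper: apply Lemma~\ref{Tran-det} to an upper triangular automorphism $\varphi$, use that Borel-fixedness of $A$ makes each $\varphi(g_i)$ a $K$-linear combination of the $g_l$, and expand $\det\Theta(\varphi(g_{i_1}),\ldots,\varphi(g_{i_d}))$ by multilinearity to land back in the span of the Gauss generators. The only difference is that you spell out the multilinear expansion and the span-versus-ideal reduction explicitly, and you additionally check the dimension condition in the definition of Borel-fixed algebra (passing to $K[x_1,\ldots,x_m]$), a point the paper's proof leaves implicit; this is a welcome refinement, not a deviation.
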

\begin{proof}
Let   $A$ be a  Borel-fixed algebra with monomial  generators  $G=\{g_1,\ldots,g_n\}$. Let  $G'$ be the set of the corresponding  monomial generators of $\GG(A)$. We want to show that $G'$ is a Borel set. For this, it is enough to show that the ideal $I'$ generated by $G'$ is a Borel-fixed ideal.
Let $g$ be a monomial generator in $I'$. Then  $g=\det(\Theta(g_{i_1},\ldots,g_{i_d}))$. Let $I$ be the monomial ideal generated by $G$. By Lemma~\ref{Tran-det}, for any   upper triangular
automorphism $\varphi:\; S\to S$, one has
\[\varphi(g)=\varphi(\det(\Theta(g_{i_1},\ldots,g_{i_d})))=\det(\varphi)^{-1}(\det\Theta(\varphi(g_{i_1}),\ldots,\varphi(g_{i_d}))).\]
Since $I$ is a Borel-fixed ideal, each $\varphi(g_i)$ is a $K$-linear combination of elements of $G$. By using the fact that $\Theta(-)$ is a multilinear function, we get $\varphi(g)\in I'$. This shows that $\GG(A)$ is Borel-fixed.
\end{proof}

\begin{Corollary}[\cite{BGS},  Proposition 3.2]\label{Veronese}
The Gauss algebra of an $r$-Veronese algebra is an $(r-1)d$-~Veronese algebra.
\end{Corollary}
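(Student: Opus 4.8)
The plan is to deduce the corollary from Theorem~\ref{Borel_fix} by identifying the $r$-Veronese algebra as a principal Borel-fixed algebra and then determining exactly which monomial is the Borel generator of $\GG(A)$. Recall that the $r$-Veronese algebra is $A = K[V_{r,d}]$ where $V_{r,d}$ is the set of \emph{all} monomials of degree $r$ in $S = K[x_1,\ldots,x_d]$; this is the principal Borel set $\langle x_d^r\rangle$, and since $x_d\mid x_d^r$ we have $\dim A = d$, so $A$ is indeed a Borel-fixed algebra in the sense of the excerpt. By Theorem~\ref{Borel_fix}, $\GG(A)$ is a Borel-fixed algebra, generated by a Borel set $G'$ of monomials of degree $(r-1)d$, and to show $\GG(A)$ is the $(r-1)d$-Veronese algebra it suffices to show that $G'$ consists of \emph{all} monomials of degree $(r-1)d$, equivalently (since $G'$ is a Borel set) that $x_d^{(r-1)d}\in G'$.

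First I would show $x_d^{(r-1)d}\in G'$ by exhibiting a $d$-subset $g_{i_1},\ldots,g_{i_d}$ of $V_{r,d}$ with $\det(\operatorname{Log}(g_{i_1},\ldots,g_{i_d}))\neq 0$ and $(g_{i_1}\cdots g_{i_d})/(x_1\cdots x_d) = x_d^{(r-1)d}$; the natural choice is the monomials $x_j x_d^{r-1}$ for $j = 1,\ldots,d$, whose product is $x_1 x_2\cdots x_d\cdot x_d^{(r-1)d-(d-1)\cdot 0}$ — more precisely $\prod_{j=1}^d x_j x_d^{r-1} = (x_1\cdots x_d)\,x_d^{d(r-1)}$, so dividing by $x_1\cdots x_d$ yields exactly $x_d^{(r-1)d}$. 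The log-matrix of $\{x_1x_d^{r-1},\ldots,x_{d-1}x_d^{r-1},x_d^r\}$ is upper triangular with nonzero diagonal entries (this is already observed in the excerpt's discussion of why $\dim A = d$), hence has nonzero determinant. Since $G'$ is a Borel set containing $x_d^{(r-1)d} = \langle x_d^{(r-1)d}\rangle$ as a member, and $\langle x_d^{(r-1)d}\rangle$ is all monomials of degree $(r-1)d$, we get that $G'$ contains every monomial of degree $(r-1)d$; the reverse containment $G'\subseteq V_{(r-1)d,d}$ is automatic from the general formula in Section~1 that $\GG(A)$ is generated by monomials of degree $(r-1)d$ in $S$. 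Therefore $\GG(A) = K[V_{(r-1)d,d}]$, the $(r-1)d$-Veronese algebra.

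I expect the only point requiring care is the verification that the Borel set generated by $x_d^{(r-1)d}$ is the full set of degree-$(r-1)d$ monomials: this follows because from $x_d^{(r-1)d}$ one reaches $x_i(u/x_d)$ for any $i < d$ and iterating the strongly-stable exchanges $x_i(u/x_j)$ with $i<j$ produces every monomial of that degree — equivalently, $x_d^m$ is $\preceq$-maximal among all degree-$m$ monomials in the partial order defined before (\ref{borel-order}). This is elementary but should be stated. No genuine obstacle arises; the substance is entirely in Theorem~\ref{Borel_fix}, and this corollary is, as the introduction notes, meant to illustrate that theorem with a short proof of \cite[Proposition 3.2]{BGS}.
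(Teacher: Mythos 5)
Your proof is correct and follows essentially the same route as the paper: exhibit $x_d^{(r-1)d}$ in $\GG(A)$ using the very same monomials $x_1x_d^{r-1},\ldots,x_{d-1}x_d^{r-1},x_d^r$ with triangular log-matrix, then conclude via Theorem~\ref{Borel_fix} that the resulting Borel set in degree $(r-1)d$ must be all of $V$eronese type; your extra remark that $\langle x_d^{(r-1)d}\rangle$ consists of all monomials of that degree is exactly the step the paper leaves implicit. The only slip is cosmetic: with the paper's convention (columns are exponent vectors) that log-matrix is lower, not upper, triangular, which of course does not affect nonvanishing of the determinant.
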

\begin{proof}
Consider  the monomials $g_1= x_1x_d^{r-1},\ldots, g_{d-1}=x_{d-1}x_d^{r-1}, g_d=x_d^r$. As the log-matrix of $\gg$ is  non-singular, the monomial   $g_1\cdots
g_d/x_1\cdots x_d= x_d^{(r-1)d}$ belongs to the Gauss algebra. Since the $r$-Veronese is a Borel-fixed ideal, the assertion follows  from
Theorem~\ref{Borel_fix}.
\end{proof}

In general the number of Borel generators of the Borel fixed algebra $A$ and that of $\GG(A)$ may be different.  In fact, let $\{x_2x_3, x_1x_4\}$ be the set of Borel generators of $A$. Then $A=K[x_1^2,x_1x_2,x_2^2,x_1x_3,x_2x_3,x_1x_4]$ and
the log-matrix of the generators of $A$ is
\[\begin{bmatrix}
	2&1&0&1&0&1\\
	0&1&2&0&1&0\\
	0&0&0&1&1&0\\
	0&0&0&0&0&1
\end{bmatrix}.\]
Therefore $\GG(A)=K[x_1^4,x_1^3x_2,x_1^2x_2^2,x_1x_2^3,x_1^3x_3,x_1^2x_2x_3,x_1x_2^2x_3]$, and $x_1x_2^2x_3$ is the single Borel generator of $\GG(A)$.

\medskip
 However if $A$ is principal Borel, then $\GG(A)$ is principal Borel as well. More precisely we have the following
\begin{Theorem}\label{principal}
	Let  $A$ be a principal Borel-fixed algebra with Borel generator  $m=x_{i_1}^{a_{i_1}}\cdots x_{i_r}^{a_{i_r}}$ with $a_{i_j}>0$, for $j=1,\ldots,r$.  Then $\mathbb{G}(A)$ is a principal Borel-fixed algebra with Borel generator
\[
m'=\frac{m^{i_r}}{x_{i_1}^{i_1-1}x_{i_2}^{i_2-i_1}\cdots  x_{i_{r-1}}^{i_{r-1}-i_{r-2}}x_{i_r}^{i_r-i_{r-1}+1}}.
\]\end{Theorem}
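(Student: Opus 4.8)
The plan is to use Theorem~\ref{Borel_fix} to reduce the statement to identifying the unique $\preceq$-maximal generator of $\mathbb{G}(A)$. By that theorem the set $G'$ of monomial generators of $\mathbb{G}(A)$ is a Borel set, and (by Section~1) it consists of the monomials $(g_{j_1}\cdots g_{j_d})/(x_1\cdots x_d)$ with $g_{j_1},\dots,g_{j_d}\in\langle m\rangle$ and $\det(\mathrm{Log}(g_{j_1},\dots,g_{j_d}))\neq0$. A Borel set is principal with generator $m'$ precisely when $m'\in G'$ and $g\preceq m'$ for every $g\in G'$, and since $u\mapsto u\cdot x_1\cdots x_d$ preserves and reflects the order $\preceq$ it suffices to argue with the products $g_{j_1}\cdots g_{j_d}$ and compare them to $M:=m'\cdot x_1\cdots x_d$. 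Setting $i_0:=1$ and $Q:=x_{i_1}^{i_1-i_0}x_{i_2}^{i_2-i_1}\cdots x_{i_r}^{i_r-i_{r-1}}$ (a monomial of degree $i_r-i_0=d-1$, as $i_r=d$), a direct manipulation of the stated formula gives $m'=m^{d}/(x_d\,Q)$, hence $M=m^{d}(x_1\cdots x_{d-1})/Q$.

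For the realization $m'\in G'$, I would exhibit the $d$ monomials $u_d:=m$ and $u_k:=m\,x_k/x_{v_k}$ $(k=1,\dots,d-1)$, where $v_k:=\min\{\,i_j:\ i_j>k\,\}$. Each $v_k$ belongs to $\{i_1,\dots,i_r\}$ and exceeds $k$, so $u_k$ arises from $m$ by replacing one factor $x_{v_k}$ by a strictly smaller variable; thus $u_k\in\langle m\rangle$. As $k$ runs through $1,\dots,d-1$ the multiset $\{v_k\}$ is exactly that of the variables of $Q$, so $u_1\cdots u_d=m^{d}(x_1\cdots x_{d-1})/Q=M$. To see the Jacobian minor is nonzero, subtract the column $\mathrm{Log}(u_d)=\mathrm{Log}(m)$ from the remaining columns: the new columns $\eb_k-\eb_{v_k}$ $(k=1,\dots,d-1)$ are the signed incidence vectors of the graph on $\{1,\dots,d\}$ with edges $\{k,v_k\}$, which is a tree (iterating $k\mapsto v_k$ strictly increases the index and ends at $d$). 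They span the hyperplane $\{\,z:\sum_l z_l=0\,\}$, which does not contain $\mathrm{Log}(m)$ since $\deg m>0$, so the determinant is nonzero. Hence $m'=M/(x_1\cdots x_d)\in G'$. (This also reproves Corollary~\ref{Veronese}, the case $r=1$.)

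The substantial part is showing $g_{j_1}\cdots g_{j_d}\preceq M$ for every admissible $d$-tuple. Comparing sorted index sequences, this is equivalent to $\sum_{l\ge t}B_l\le\sum_{l\ge t}(\text{exponent of }x_l\text{ in }M)$ for $t=1,\dots,d$, where $B_l$ is the exponent of $x_l$ in the product. Put $p(t):=\#\{j:\ i_j<t\}$ and $E_p:=a_{i_{p+1}}+\cdots+a_{i_r}$ (so $E_0=\deg m$); a short computation from $M=m^{d}(x_1\cdots x_{d-1})/Q$ evaluates the right-hand side as $d\,E_{p(t)}+i_{p(t)}-t$. On the left, $\sum_{l\ge t}B_l=\sum_k\sigma_t(g_{j_k})$ with $\sigma_t(u):=\sum_{l\ge t}(\text{exponent of }x_l\text{ in }u)$, and $\sigma_t(u)\le E_{p(t)}$ for every $u\in\langle m\rangle$ by~(\ref{borel-order}). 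The crux — and the step I expect to be the main obstacle — is to bound how many of the $g_{j_k}$ attain this maximum: if $u\in\langle m\rangle$ and $\sigma_t(u)=E_{p(t)}$, then $u$ must vanish on $x_{i_{p(t)}+1},\dots,x_{t-1}$, carry mass exactly $E_{p(t)}$ on $x_t,\dots,x_d$, and mass $\deg m-E_{p(t)}$ on $x_1,\dots,x_{i_{p(t)}}$, so all such $u$ lie in an affine subspace of $\RR^d$ of dimension $d-t+i_{p(t)}-1$; since $\mathrm{Log}(g_{j_1}),\dots,\mathrm{Log}(g_{j_d})$ are linearly independent, at most $d-t+i_{p(t)}$ of them can lie there, and the remaining $\ge t-i_{p(t)}$ columns each contribute at most $E_{p(t)}-1$. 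Summing,
\[
\sum_{l\ge t}B_l\ \le\ (d-t+i_{p(t)})\,E_{p(t)}+(t-i_{p(t)})\,(E_{p(t)}-1)\ =\ d\,E_{p(t)}+i_{p(t)}-t,
\]
exactly the required bound; the case $t=1$ is trivial (both sides equal $d\deg m$) and $p(t)=0$ is covered by the same count with $i_0=1$.

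Combining the two parts, $G'$ is a Borel set containing $m'$ in which every element is $\preceq m'$, so $G'=\langle m'\rangle$ and $\mathbb{G}(A)$ is a principal Borel-fixed algebra with Borel generator $m'$. The delicate points are making the dimension count land exactly on $d-t+i_{p(t)}$ — the displayed inequality closes only because of this equality — and keeping track of the boundary conventions $i_0=1$ and $i_r=d$ and of the degenerate case $r=1$.
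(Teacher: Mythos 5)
Your proof is correct, and its skeleton is the paper's: reduce via Theorem~\ref{Borel_fix} to showing that $m'$ occurs among the Gauss generators and that every Gauss generator is $\preceq m'$. Your realizing family $u_k=x_km/x_{v_k}$ together with $u_d=m$ is exactly the paper's family $g_{k,l}=x_l(m/x_{i_k})$ (your explicit inclusion of $m$ even repairs a small indexing slip, since the paper's displayed product has only $d-1$ factors), and your non-singularity check via the signed incidence vectors of the tree with edges $\{k,v_k\}$ spanning the sum-zero hyperplane is a cleaner packaging of the paper's cofactor expansion and column operations. Where you genuinely diverge is the maximality step. The paper argues by contradiction at the single critical threshold $t=i_j+1$ supplied by (\ref{borel-order}) applied to $m'$: a violation forces all $d$ factors to attain the maximal upper degree $E_j$, whence the column sums of the bottom $d-i_j$ rows and of the top $i_j$ rows of the log-matrix are constant, producing a row dependence that contradicts non-singularity. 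You instead prove the dominance inequality $\sum_{l\ge t}B_l\le dE_{p(t)}+i_{p(t)}-t$ directly at every threshold $t$, using the quantitative lemma that the log vectors of extremal factors lie in an affine subspace of dimension $d-t+i_{p(t)}-1$, so at most $d-t+i_{p(t)}$ of the $d$ linearly independent columns can be extremal, the rest losing at least one each. At the critical thresholds your count reduces to ``at least one factor is non-extremal,'' which is the same obstruction the paper exploits, but your affine-dimension count is sharper and handles all thresholds uniformly, at the price of checking dominance everywhere rather than only where the support of $m'$ jumps; it also yields the inequality directly rather than by contradiction. I verified the two computations your argument rests on, namely $\sigma_t(M)=dE_{p(t)}+i_{p(t)}-t$ (telescoping the exponents of $Q$) and the independence of the affine conditions cutting out the extremal locus, as well as the boundary conventions $i_0=1$ and $i_r=d$ (the latter because $\dim A=d$ forces $x_d\mid m$); all are in order.
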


\begin{proof}
	 We first show that $m'\in \GG(A)$.	Let $g_{k,l}=x_l(m/x_{i_k})$, $k=1,\ldots,r$, $l=i_{k-1},\ldots,i_k-1$ for all $k$, where $i_0=1$. Then
	the $g_{k,l}$ belong to $\langle m\rangle$, and
	\[m'=(\prod_{k=1}^{r}\prod_{l=i_{k-1}}^{i_k-1}g_{k,l})/x_1x_2\cdots x_d.
	\]
	
	We order the monomials $g_{k,l}$ lexicographically and consider the corresponding  log-matrix $A$. The $i$th row with $i\notin\{i_1,\ldots,i_r\}$  has only
	one  non-zero entry which is $1$. So in order to compute the determinant of the log-matrix, we reduce to the computation of the cofactor of that nonzero entry, indeed we skip the $i$th row and the column corresponding to the   nonzero
	entry.  This can be done for all $i\notin\{i_1,\ldots,i_r\}$. Then  we obtain the log-matrix $M$ of the following sequence of monomials
	\[m, x_{i_1}\frac{m}{x_{i_2}},\ldots, x_{i_{r-1}}\frac{m}{x_{i_r}}
	\]
	with respect to $x_{i_1},\ldots, x_{i_r}$. Subtracting the first column of $M$ from the other columns of $M$, we obtain the following matrix
	\[
	\begin{bmatrix}
	a_{i_1}& 1&0&\cdots&\cdots & \cdots& 0\\
	a_{i_2}&-1&1&0&\cdots & \cdots& 0\\
	a_{i_3} & 0 & -1& 1 & 0 &\cdots&0\\
	\vdots & \vdots &\ddots& \ddots&\ddots& {\ddots}&\vdots\\
	\vdots & \vdots && \ddots&\ddots& 1 & 0\\
	a_{i_{r-1}}& 0 & \cdots&\cdots& 0 & -1 & 1\\
	a_{i_r}& 0 & \cdots&\cdots&\cdots& 0 & -1\\
	\end{bmatrix}
	\]
	Now for each $i>1$, we add  the  $i$th row  to the first row. The result is a lower triangular matrix  with non-zero entries on the diagonal. This shows that $A$
	is non-singular, and proves that $m'$ is a generator of the Borel-fixed algebra $\GG(A)$.
		
Since  $\GG(A)$ is a Borel-fixed ideal, by  Theorem~\ref{Borel_fix},
	it is enough to prove that for  any monomial $g$ in $\GG(A)$, one has $g\preceq m'$. Let $m=x^{a_{i_1}}_{i_1}\cdots x^{a_{i_r}}_{i_r}$, $m'=x^{a'_{i_1}}_{i_1}\cdots x^{a'_{i_r}}_{i_r}$.  By definition of $m'$, we have
	$a'_{i_j}=i_ra_{i_{j+1}}-i_{j+1}+i_j$ for $j=2,\ldots,r-1$, and $a'_{i_r}=i_r(a_{i_r}-1)+i_{r-1}-1$.
Let    \[g=(\prod^{i_r}_{i=1}g_i)/x_1x_2\cdots x_{i_r},\] where $g_1,\ldots,g_{i_r}$ belong to the minimal monomial generating set
	of $A$,  the latter having a non-singular log-matrix. If $g\npreceq m'$, then by Borel order property (\ref{borel-order}), there exists 	  $1\leq j\leq r-1$, such that $\prod^{i_r}_{j=1}g_j$ is divisible by
	$w=x_{i_j}^{b_{i_j}}\cdots x_{i_r}^{b_{i_r}}$, and
\begin{eqnarray*}
	\sum^{i_r}_{l=i_j+1}b_l-(i_r-i_j)&\geq &1+\sum^{i_r}_{l=i_{j+1}}a'_{i_l}\\
	&=&\sum_{k=j}^{r-2}(i_ra_{i_{k+1}}-i_{k+1}+i_k)+i_r(a_{i_{r}}-1)+i_{r-1}\\
	&=&(\sum^r_{l=j+1}a_{i_l}-1)i_r+i_j.
\end{eqnarray*}
	 Therefore,
		\begin{eqnarray}\label{clv}
		\sum^r_{l=i_j+1}b_{i_l}\geq (\sum^r_{l=j+1}a_{i_l})i_r.
		\end{eqnarray}
	 We may write $g_s$ as a product of monomials $g_s=f_{s}h_{s}$ with  $\supp(f_{s})\subseteq\{i_1,\ldots,i_j\}$ and $\supp(h_{s})\subseteq\{i_{j+1},\ldots,i_r\}$. As  $g_s\preceq x^{a_{i_1}}_{i_1}\cdots x^{a_{i_r}}_{i_r}$, we have $\deg(h_{s})\leq\sum^r_{l=j+1}a_{i_j}$ and, since $w$ divides $g_1\cdots g_{i_r}$, we get
\[
\sum^{i_r}_{l=i_j+1}b_{i_l}\leq \sum^d_{s=1}\deg(h_s)\leq d\sum^r_{l=j+1}a_{i_l}.
\]
Together with (\ref{clv}), it follows  that $\sum^{i_r}_{s=1}\deg(h_s)=i_r\sum^r_{l=j+1}a_{i_l}$ and, this implies $\deg(h_s)=\sum^r_{l=j+1}a_{i_l}$.

Let $L$ be the log-matrix of $g_1,\ldots,g_{i_r}$.  Then the summation of the last $i_r-i_j$ entries of  each column of $L$ is equal to $\sum^r_{l=j+1}a_{i_l}$, and so the summation of the first $j$ entries of each column is equal to $i_r-\sum^r_{l=j+1}a_{i_l}$. This implies that $L$ is singular, a contradiction.
	\end{proof}

\begin{Corollary}\label{Emma}
	Let $A$ be a principal Borel-fixed algebra. Then $\GG(A)$ is normal and for suitable monomial order its defining ideal has a quadratic Gr\"obner basis.
\end{Corollary}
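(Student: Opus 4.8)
The plan is to deduce this from Theorem~\ref{principal} together with Sturmfels' sorting technique for toric ideals. By Theorem~\ref{principal}, $\GG(A)$ is again a principal Borel-fixed algebra, say $\GG(A)=K[\langle m'\rangle]$ for a suitable monomial $m'$. Hence it is enough to prove that for an arbitrary monomial $u$ the toric algebra $K[\langle u\rangle]$ generated by the principal Borel set $\langle u\rangle$ is normal and that, for a suitable term order, its defining (toric) ideal has a quadratic Gr\"obner basis.

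First I would recall the sorting operator. Put $\rho=\deg u$. Given two monomials $v=x_{k_1}\cdots x_{k_\rho}$ and $w=x_{l_1}\cdots x_{l_\rho}$, write the product as $vw=x_{p_1}x_{p_2}\cdots x_{p_{2\rho}}$ with $p_1\le p_2\le\cdots\le p_{2\rho}$, and set $\operatorname{sort}(v,w)=(v',w')$ with $v'=x_{p_1}x_{p_3}\cdots x_{p_{2\rho-1}}$ and $w'=x_{p_2}x_{p_4}\cdots x_{p_{2\rho}}$. A set $\mathcal M$ of monomials of degree $\rho$ is \emph{sortable} if $v',w'\in\mathcal M$ whenever $v,w\in\mathcal M$. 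I then want to establish the key claim: \emph{every principal Borel set $\langle u\rangle$ is sortable.}

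To prove the claim, recall that if $i_1\le i_2\le\cdots\le i_\rho$ is the sorted index vector of $u$, then $\langle u\rangle=\{\,v\ :\ v\preceq u\,\}$, where $v\preceq u$ means that the $k$-th smallest index of $v$ is at most $i_k$ for all $k$ (the partial order $\preceq$ on monomials introduced above). Take $v,w\in\langle u\rangle$, let $(v',w')=\operatorname{sort}(v,w)$, and fix $k$. Since $v\preceq u$, at least $k$ of the $\rho$ indices of $v$ (counted with multiplicity) are $\le i_k$, and likewise for $w$; hence at least $2k$ of the $p_j$ are $\le i_k$, so $p_{2k}\le i_k$. As the subsequences $p_1\le p_3\le\cdots$ and $p_2\le p_4\le\cdots$ are already sorted, the $k$-th index of $v'$ equals $p_{2k-1}\le p_{2k}\le i_k$ and the $k$-th index of $w'$ equals $p_{2k}\le i_k$. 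Letting $k$ range over $1,\ldots,\rho$ gives $v'\preceq u$ and $w'\preceq u$, that is, $v',w'\in\langle u\rangle$, proving sortability. (As an alternative to this computation, one may simply cite the known fact that principal strongly stable sets are sortable.)

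Finally I would invoke Sturmfels' theorem on sortable sets: for a sortable $\mathcal M$ the toric ideal $I_{\mathcal M}$ has a reduced Gr\"obner basis consisting of quadratic binomials with respect to the sorting term order, and the corresponding initial ideal is generated by squarefree quadratic monomials; since a toric algebra whose defining ideal admits a squarefree initial ideal for some term order is normal (and then Cohen--Macaulay, by Hochster's theorem), applying this with $\mathcal M=\langle m'\rangle$ yields all assertions for $\GG(A)$. I expect the sortability verification to be the only real obstacle, but the interleaving count above shows it is short once one fixes the bookkeeping; everything else is a direct appeal to standard results on sortable monomial sets.
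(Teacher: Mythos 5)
Your argument is correct, and it overlaps with the paper's proof in its first two moves: both start from Theorem~\ref{principal} to reduce to the case of a principal Borel set, and both obtain the quadratic Gr\"obner basis from sortability. The differences are in how the ingredients are sourced and how normality is obtained. The paper simply cites De Negri \cite{dN} for the fact that principal Borel-fixed sets are sortable, whereas you reprove it with the interleaving count; your verification is sound (from $v,w\preceq u$ one indeed gets that at least $2k$ of the combined exponents' indices are $\le i_k$, hence $p_{2k-1}\le p_{2k}\le i_k$, so both sorted factors stay in $\langle u\rangle$), and it is essentially De Negri's argument, so this is a matter of self-containedness rather than substance. For normality the routes genuinely diverge: the paper observes that a principal Borel set is a (discrete) polymatroid and invokes \cite[Corollary~6.2]{HH}, while you extract normality from the Gr\"obner basis itself, using that the sorting order yields squarefree quadratic initial terms (unsorted pairs consist of distinct generators) and then Sturmfels' result that a toric ideal with a squarefree initial ideal defines a normal semigroup ring. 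Your route buys a single unified mechanism -- sortability gives both conclusions at once and is self-contained modulo Sturmfels' theorem -- while the paper's polymatroid route gives normality independently of any term order and ties the corollary to the polymatroid viewpoint that reappears elsewhere in the paper (e.g.\ Proposition~\ref{kirschtorte}). Both are complete proofs of the stated corollary.
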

\begin{proof}
	By the above theorem, $\GG(A)$ is a principal Borel fixed algebra. A principal Borel set is a  polymatroid. Therefore  $\GG(A)$ is normal, see \cite[Corollary~6.2]{HH}. In \cite{dN} it is shown  that the principal Borel-fixed sets are sortable, and so $\GG(A)$ has quadratic Gr\"obner basis.
\end{proof}	

\begin{Corollary}
Let $A$ be a Borel-fixed algebra such that $\dim A=\dim\GG(A)=d$. Then the Guass map $\gamma: \proj{A}\dashrightarrow \proj{\GG(A)}$ is birational.
\end{Corollary}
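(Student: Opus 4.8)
The plan is to deduce this from Remark~\ref{birational} together with Theorem~\ref{Borel_fix}. I would write $A=K[G]$ with $G=\{g_1,\dots,g_n\}$ the Borel set of monomials of common degree $r$ generating $A$, and let $G'$ be the set of monomial generators of $\GG(A)$; recall from Section~1 that the elements of $G'$ are the monomials $(g_{i_1}\cdots g_{i_d})/(x_1\cdots x_d)$ with $\det(\mathrm{Log}(g_{i_1},\dots,g_{i_d}))\neq 0$, all of degree $(r-1)d$. By Remark~\ref{birational} it is enough to show that every $g_i/g_j$ is a product of fractions $(g_{i_1}\cdots g_{i_d})/(g_{j_1}\cdots g_{j_d})$ whose two log-matrices are nonsingular. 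Since $g_i/g_j$ is a ratio of monomials of the same degree, it is a product of the fractions $x_k/x_l$ ($k\neq l$) and their inverses, so I would reduce to realizing each single $x_k/x_l$ as a fraction of the required form.

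The heart of the matter is the translation of the hypothesis $\dim\GG(A)=d$. By Theorem~\ref{Borel_fix}, $\GG(A)$ is again a Borel-fixed algebra, i.e.\ $G'$ is a Borel set, now of monomials of degree $(r-1)d$. I would then apply to $G'$ the characterization recorded just before Theorem~\ref{Borel_fix} — a Borel set of monomials of degree $s$ generating a $d$-dimensional algebra contains $x_1^{s}, x_1^{s-1}x_2,\dots,x_1^{s-1}x_d$ — to conclude $x_1^{(r-1)d-1}x_k\in G'$ for every $k\in\{1,\dots,d\}$ (reading $x_1^{(r-1)d-1}x_1=x_1^{(r-1)d}$). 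Fixing $k\neq l$, by the description of $G'$ there are indices with $\det(\mathrm{Log}(g_{a_1},\dots,g_{a_d}))\neq0$ and $\det(\mathrm{Log}(g_{b_1},\dots,g_{b_d}))\neq0$ such that
\[
x_1^{(r-1)d-1}x_k=\frac{g_{a_1}\cdots g_{a_d}}{x_1\cdots x_d}
\quad\text{and}\quad
x_1^{(r-1)d-1}x_l=\frac{g_{b_1}\cdots g_{b_d}}{x_1\cdots x_d},
\]
and dividing these identities gives $x_k/x_l=(g_{a_1}\cdots g_{a_d})/(g_{b_1}\cdots g_{b_d})$, exactly a fraction of the type demanded by Remark~\ref{birational}. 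That would finish the proof.

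I do not anticipate a genuine obstacle: Theorem~\ref{Borel_fix} and the Borel-set dimension characterization do essentially all of the work, and the statement really is a short corollary. The one point that needs care is that the dimension hypothesis must be invoked for $\GG(A)$, not for $A$: it is the combination of the conclusion of Theorem~\ref{Borel_fix} (that $\GG(A)$ is Borel-fixed) with the assumption $\dim\GG(A)=d$ that forces $G'$ to contain the monomials $x_1^{(r-1)d-1}x_k$, and for a general toric $A$ this route would collapse since $\GG(A)$ need not be Borel-fixed. I would also check the degenerate cases $r=1$ and $d=1$, where either $\dim\GG(A)=d$ cannot hold or there is no pair $k\neq l$ to treat, so the statement holds vacuously.
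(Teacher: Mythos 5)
Your proposal is correct and follows essentially the same route as the paper: both arguments use Theorem~\ref{Borel_fix} together with the dimension hypothesis to produce generators of $\GG(A)$ whose quotient realizes $x_k/x_l$, and then invoke Remark~\ref{birational}. The only cosmetic difference is that you use the canonical Borel elements $x_1^{(r-1)d-1}x_k$ of $G'$, while the paper takes an arbitrary generator $u$ with $x_d\mid u$ and uses $x_i(u/x_d)$ and $x_j(u/x_d)$; the underlying mechanism is identical.
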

\begin{proof}
By the hypothesis on the dimension of $\GG(A)$, there exists a generator $u$ of $\GG(A)$ such that $x_d|u$. For $1\leq i<j\leq d$ we have
\[\frac{x_i}{x_j}=\frac{x_i(u/x_d)}{x_j(u/x_d)},\]
which implies that $\gamma$ is birational, since any quotient of monomials in $A$ is the product of some of the  $x_i/x_j$, see Remark~\ref{birational}.
\end{proof}

\bigskip

\section{Squarefree Veronese algebras }

 Let $V_{r,d}$ be the set of all squarefree monomials of degree $r$ in $S=K[x_1,\ldots,x_d]$. The $K$-subalgebra
$A=K[V_{r,d}]$ of $S$  is called the \textit{squarefree  $r$-Veronese} algebra. By Proposition~\ref{Veronese}, the Gauss algebra associated to a Veronese
algebra is again a Veronese algebra. The situation  for  squarefree Veronese algebra is more complicated.

\medskip

Denote by $\mathrm{Mon}_S(t,r)$ the set of all monomials $u$  of degree $r$ in $S$, such that $|\supp(u)|\geq t$, where $\supp(u)=\{i  \ : \ x_i|u\}$.

\begin{Proposition}
\label{kirschtorte}
The monomial ideal generated by $\mathrm{Mon}_S(t,r)$ is  polymatroidal. In particular,  the $K$-algebra $K[\mathrm{Mon}_S(t,r)]$ is
normal and Cohen--Macaulay.
\end{Proposition}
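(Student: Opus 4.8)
The plan is to verify directly the combinatorial exchange condition characterizing polymatroidal ideals, and then to invoke the structure theory of base rings of discrete polymatroids for the homological part of the statement.

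Write $I=(\mathrm{Mon}_S(t,r))$. Since all monomials in $\mathrm{Mon}_S(t,r)$ have the same degree $r$, none divides another, so $G(I)=\mathrm{Mon}_S(t,r)$ is the minimal monomial generating set. Recall that $I$ is polymatroidal precisely when, for any two generators $u=x_1^{a_1}\cdots x_d^{a_d}$ and $v=x_1^{b_1}\cdots x_d^{b_d}$ and any index $i$ with $a_i>b_i$, there is an index $j$ with $a_j<b_j$ such that $x_j(u/x_i)\in G(I)$. Since $x_j(u/x_i)$ automatically has degree $r$, the membership $x_j(u/x_i)\in G(I)$ is equivalent to $|\supp(x_j(u/x_i))|\geq t$; and $\supp(x_j(u/x_i))$ is obtained from $\supp(u)$ by deleting $i$ (which happens only if $a_i=1$) and adjoining $j$ (which happens only if $a_j=0$).

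Then I would split into two cases. If $|\supp(u)|\geq t+1$, or if $a_i\geq 2$, then deleting $i$ from $\supp(u)$ still leaves at least $t$ elements, so \emph{any} index $j$ with $a_j<b_j$ works; such a $j$ exists because $\sum_k a_k=\sum_k b_k=r$ while $a_i>b_i$. Otherwise $|\supp(u)|=t$ and $a_i=1$; then $b_i=0$, so $i\in\supp(u)\setminus\supp(v)$, and since $|\supp(v)|\geq t=|\supp(u)|$ we cannot have $\supp(v)\subseteq\supp(u)$, hence there is some $j\in\supp(v)\setminus\supp(u)$. For this $j$ one has $a_j=0<b_j$, and $\supp(x_j(u/x_i))=(\supp(u)\setminus\{i\})\cup\{j\}$ again has exactly $t$ elements, so the exchange succeeds. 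This proves that $I$ is polymatroidal.

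For the last assertion, the exponent vectors of the generators of $I$ are by definition exactly the bases of a discrete polymatroid on the ground set $\{1,\dots,d\}$, and $K[\mathrm{Mon}_S(t,r)]$ is its base ring; by \cite{HH} the base ring of a discrete polymatroid is normal, and a normal affine semigroup ring is Cohen--Macaulay by Hochster's theorem, which gives the claim. The only point requiring care is the boundary case $|\supp(u)|=t$, $a_i=1$: there the exchange index must be taken outside $\supp(u)$, so that the loss of $x_i$ from the support is compensated by the gain of $x_j$, and it is precisely the cardinality inequality $|\supp(v)|\geq|\supp(u)|$ that forces such an index to exist.
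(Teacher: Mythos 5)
Your proof is correct and follows essentially the same route as the paper: a direct verification of the polymatroidal exchange property with the same case distinction (the index to be removed has exponent $\geq 2$ or $|\supp(u)|>t$, versus the boundary case $a_i=1$ and $|\supp(u)|=t$, where the exchange index is taken in $\supp(v)\setminus\supp(u)$), followed by the Herzog--Hibi normality result for polymatroidal ideals and Hochster's theorem for Cohen--Macaulayness. The only difference is cosmetic: the paper normalizes to $i=1$ by symmetry and splits the first case into two, while you merge them.
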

\begin{proof}
 The normality of  the $K$-algebra $K[\mathrm{Mon}_S(t,r)]$ follows from \cite[Theorem~6.1]{HH}, once we have shown that the ideal generated by
$\mathrm{Mon}_S(t,r)$ is polymatroidal. Let $u=x_1^{a_1}\cdots x_d^{a_d},v=x_1^{b_1}\cdots x_d^{b_d}\in\mathrm{Mon}_S(t,r)$. By symmetry we may assume that
$a_1>b_1$. Suppose $a_1>1$, then $x_iu/x_1\in \mathrm{Mon}_S(t,r)$ for any $i\neq 1$, and so the exchange property holds.  Next suppose that $a_1=1$, then
$b_1=0$. If $\supp(u)$ has more than $t$ elements, we may replace $x_1$ by any variable $x_i\in\supp(v)$. Finally, suppose that $\supp(u)$ has exactly $t$
elements. Since $x_1\notin\supp(v)$, there exists $x_j\in\supp(v)\setminus\supp(u)$. Replacing $x_1$ by $x_j$, the exchange property is satisfied.
\end{proof}

In the following result we describe  the structure of the Gauss algebra of the squarefree $2$-Veronese algebra $K[V_{2,d}]$. Note that for $d\leq 3$, the Gauss algebra is isomorphic to a polynomial ring.

\begin{Theorem}\label{2-Veronese}
Let $A=K[V_{2,d}]$, with $d\geq4$. Then
 \begin{enumerate}
 	\item [(a)]
 	$\GG(A)=K[\mathrm{Mon}_S(3,4)\setminus\{x_1x_2x_3x_4\}]$, if $d=4$;
 	
 	\item[(b)] $\GG(A)=K[\mathrm{Mon}_S(3,d)]$,  if $d\geq 5$;
 	
 	\item[(c)] the embedding dimension of $\GG(A)$ is $$\mathrm{edim} \GG(k[A])=\left\{
 	\begin{array}{ll}
 	e-1, &  if\, \hbox{d=4,} \\
 	e, & if \, \hbox{d=5,}
 	\end{array}
 	\right.
 	$$
 	where $e=\binom{2d-1}{d}-(d-1)\binom{d}{2}-d$;
 	\item[(d)] the Gauss algebra is a normal Cohen--Macaulay domain, if and only if $d\geq 5$;
 	\item[(e)] the Gauss map $\gamma: \proj{A}\dashrightarrow \proj{\GG(A))}$ is birational.
 \end{enumerate}
\end{Theorem}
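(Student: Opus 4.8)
The plan is to recast everything in graph-theoretic terms. Identifying the generators $x_ix_j$ ($i<j$) of $A=K[V_{2,d}]$ with the edges of the complete graph $K_d$ on $[d]$, for a $d$-element set of edges $f_1,\dots,f_d$ the matrix $\mathrm{Log}(x_{f_1},\dots,x_{f_d})$ is precisely the vertex--edge incidence matrix of the subgraph $H=([d],\{f_1,\dots,f_d\})$. By the classical criterion for square incidence matrices \cite{GKS}, its determinant is nonzero exactly when $H$ is spanning and every connected component of $H$ is unicyclic with an odd cycle; I will call such an $H$ a \emph{spanning odd pseudoforest}. The corresponding generator of $\GG(A)$ is then $(\prod_\ell x_{f_\ell})/(x_1\cdots x_d)=\prod_{i=1}^{d}x_i^{\deg_H(i)-1}$, a monomial of degree $d$ (consistent with Section~1). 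Thus parts (a) and (b) reduce to deciding which degree-$d$ monomials $\prod_i x_i^{a_i}$ arise this way, i.e.\ which sequences $(a_i+1)_i$ are degree sequences of spanning odd pseudoforests on $[d]$. One inclusion is immediate: each component of such an $H$ contains an odd cycle, hence at least three vertices of degree $\ge2$, so the support of $\prod_i x_i^{\deg_H(i)-1}$ has at least three elements; and when $d=4$ the monomial $x_1x_2x_3x_4$ would force $H$ to be $2$-regular on four vertices, i.e.\ $H=C_4$, whose incidence matrix is singular. This yields $\GG(A)\subseteq K[\mathrm{Mon}_S(3,d)]$ always, together with $x_1x_2x_3x_4\notin\GG(A)$ when $d=4$.

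The converse inclusion is the crux, and the step I expect to require genuine care. Given $u=\prod x_i^{a_i}$ of degree $d$ with $k:=|\supp(u)|\ge3$ (and $u\ne x_1x_2x_3x_4$ if $d=4$), I must construct a spanning odd pseudoforest on $[d]$ with degree sequence $d_i:=a_i+1$; after relabeling assume $d_1\ge\cdots\ge d_k\ge2>1=d_{k+1}=\cdots=d_d$. If $k=d$ then every $d_i=2$, and the task becomes covering $[d]$ by vertex-disjoint odd cycles; this is possible for every $d\ge5$ (take $C_d$ if $d$ is odd, and $C_3$ together with $C_{d-3}$ if $d$ is even) and impossible for $d=4$, which is exactly the excluded monomial. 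If $3\le k<d$, set $s:=d-k\ge1$; since $\sum_{i=1}^{k}d_i=d+k$ the $d_i$ cannot all equal $2$, so $d_1\ge3$. Now form the triangle on $\{1,2,3\}$, attach the path $1-4-5-\cdots-k$ when $k\ge4$, and finally attach $d_i-c_i$ new pendant vertices to each vertex $i$, where $c_i$ is the degree of $i$ in the triangle-plus-path. A short verification shows $c_i\le d_i$ for all $i$ — here one uses $d_1\ge3$, which is why the path is rooted at vertex $1$ — and, since the triangle-plus-path has $k$ edges, $\sum_{i=1}^{k}(d_i-c_i)=(d+k)-2k=d-k=s$, so precisely the $s$ available degree-one vertices get used. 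The result is connected and spanning, has a single cycle (the triangle, of odd length $3$), and has the prescribed degree sequence; by the dictionary it realizes $u$, so $u\in\GG(A)$. I expect the only subtlety here to be keeping the constraints $c_i\le d_i$ and the pendant count consistent at once, which is handled by placing the forced vertex with $d_1\ge3$ at the junction of the triangle and the path. This establishes (a) and (b).

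The remaining items follow quickly. Because $\GG(A)$ is generated by monomials all of the same degree $d$, this generating set is already minimal, so $\mathrm{edim}\,\GG(A)$ equals $|\mathrm{Mon}_S(3,d)|$ for $d\ge5$ and $|\mathrm{Mon}_S(3,4)|-1$ for $d=4$; a direct count gives $|\mathrm{Mon}_S(3,d)|=\binom{2d-1}{d}-d-(d-1)\binom{d}{2}=e$, which is (c). For (d): when $d\ge5$, $\GG(A)=K[\mathrm{Mon}_S(3,d)]$ is normal and Cohen--Macaulay by Proposition~\ref{kirschtorte}; when $d=4$ the relation $(x_1x_2x_3x_4)^2=(x_1^2x_2x_3)(x_2x_3x_4^2)\in\GG(A)$ exhibits $x_1x_2x_3x_4$ as an element of the integral closure of $\GG(A)$ not belonging to $\GG(A)$, so $\GG(A)$ fails to be normal — hence it is a normal Cohen--Macaulay domain precisely for $d\ge5$. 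For (e): by Remark~\ref{birational} it is enough to express each quotient $g_i/g_j$ of generators of $A$ as a product of quotients $(g_{i_1}\cdots g_{i_d})/(g_{j_1}\cdots g_{j_d})$ with both log-matrices nonsingular, and since $g_i/g_j$ is a product of fractions $x_p/x_q$, it suffices to treat $x_p/x_q$. Choosing $r\notin\{p,q\}$ and any monomial $w$ of degree $d-4\ge0$, both $u_1:=x_p^2x_qx_rw$ and $u_2:=x_px_q^2x_rw$ lie in $\mathrm{Mon}_S(3,d)$ and, when $d=4$, differ from $x_1x_2x_3x_4$ (they have support $\{p,q,r\}$), hence are generators of $\GG(A)$, and $x_p/x_q=u_1/u_2$ is the required quotient; therefore $\gamma$ is birational for every $d\ge4$.
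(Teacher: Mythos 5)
Your proposal is correct, and in the two places where the paper's proof is least self-contained you take a genuinely different route. For (a) and (b) the paper argues by induction on $d$: the base cases $d=4,5$ are delegated to ``direct computations'', the full-support monomial $x_1\cdots x_d$ is realized by one odd cycle (or a triangle plus an odd cycle), and a monomial with smaller support is reduced by dividing out a repeated variable and adjoining one extra edge to an inductively constructed configuration. You instead set up the same dictionary (via Lemma~\ref{minor}, generators of $\GG(A)$ correspond to spanning subgraphs of $K_d$ with $d$ edges all of whose components are unicyclic with odd cycle, the generator being $\prod_i x_i^{\deg_H(i)-1}$) and then give a single uniform construction --- triangle on the three largest-degree vertices, a path rooted at the vertex with $d_1\geq 3$, and pendants distributed by the degree surplus --- which realizes every admissible degree sequence at once; this buys you a computation-free treatment of $d=4,5$ and also yields the exclusion of $x_1x_2x_3x_4$ (forced $C_4$) as part of the same argument, where the paper's published proof only asserts it. For (d) at $d=4$ the paper invokes a Singular computation showing a negative entry in the $h$-vector (failure of Cohen--Macaulayness), while you show failure of normality via $(x_1x_2x_3x_4)^2=(x_1^2x_2x_3)(x_2x_3x_4^2)$; either property suffices for the stated equivalence, and your argument is machine-free, but you should add the one missing line that $x_1x_2x_3x_4$ lies in the fraction field of $\GG(A)$ --- e.g.\ $x_1x_2x_3x_4=(x_1^2x_2x_3)(x_1x_2x_4^2)/(x_1^2x_2x_4)$, all three factors being generators by (a) --- since integrality alone does not witness non-normality. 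Parts (c) and (e) coincide with the paper's arguments (same count, and essentially the same explicit fractions as in Remark~\ref{birational}), and your appeal to Proposition~\ref{kirschtorte} for $d\geq 5$ in (d) is exactly the paper's.
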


For the proof of the theorem, we need the following
\begin{Lemma}[{\cite[Theorem~2.1]{GKS}}]\label{minor}
	Let $G$  {be} a loop-less connected graph  with the same number of vertices and edges. Then the log-matrix of the edge ideal of $G$  is non-singular if and only
	if $G$ contains an odd cycle.
\end{Lemma}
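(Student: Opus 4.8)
The plan is to first identify the object under study. The log-matrix $N$ of the edge ideal of $G$ is nothing but the (unsigned) vertex--edge incidence matrix of $G$: its rows are indexed by the vertices $1,\dots,d$, its columns by the edges of $G$, and the column attached to an edge $\{i,j\}$ is $e_i+e_j$, where $e_1,\dots,e_d$ is the standard basis of $K^d$. Since $G$ is connected with as many edges as vertices, $N$ is a $d\times d$ matrix and $G$ is \emph{unicyclic}: its first Betti number is $|E|-|V|+1=1$, so $G$ contains exactly one cycle $C$, of some length $\ell$. Because $C$ is the \emph{only} cycle of $G$, the condition ``$G$ contains an odd cycle'' is equivalent to ``$\ell$ is odd'', and the lemma reduces to proving that $\det N\neq 0$ if and only if $\ell$ is odd.

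Next I would pass from $\det N\neq 0$ to linear independence of the columns of $N$. A relation $\sum_{f=\{i,j\}}\lambda_f(e_i+e_j)=0$ is, coordinate by coordinate, exactly the system of \emph{vertex equations} $\sum_{f\ni v}\lambda_f=0$, one for each vertex $v$. The engine of the argument is leaf stripping: if $v$ has degree $1$ and lies on the edge $f$, its vertex equation forces $\lambda_f=0$; deleting $v$ and $f$ produces a connected graph $G'$ with $|V(G')|=|E(G')|$ whose vertex equations are precisely the ones that survive. Iterating peels off every pendant tree hanging on $C$, and since a connected graph with $|V|=|E|$ and minimum degree at least $2$ must have every vertex of degree exactly $2$ (from $\sum_v\deg v=2|E|=2|V|$) and hence be a single cycle, the process terminates at $C$ itself. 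Consequently the solutions of the original system are in bijection with the solutions of the vertex equations of $C$, all stripped variables being forced to $0$.

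Finally I would solve the cycle system. Labelling the edges of $C$ cyclically $f_1,\dots,f_\ell$, the vertex equations read $\lambda_{f_{k-1}}+\lambda_{f_k}=0$ with indices mod $\ell$, so all the $\lambda_{f_k}$ equal $\lambda_{f_1}$ up to an alternating sign and going once around $C$ yields $\bigl(1-(-1)^\ell\bigr)\lambda_{f_1}=0$. If $\ell$ is even this is solved by $\lambda_{f_k}=(-1)^k$, a nontrivial dependence, so $N$ is singular; if $\ell$ is odd then $2\lambda_{f_1}=0$, and since $\chara(K)=0$ this forces $\lambda=0$, so the columns of $N$ are independent and $N$ is nonsingular (one even reads off $|\det N|=2$ in this case). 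I expect the only point requiring genuine care to be the bookkeeping in the stripping step --- that deleting a leaf keeps the graph connected and preserves $|V|=|E|$, and that the residue of the process is really the cycle $C$; the characteristic-zero hypothesis is essential here, as over a field of characteristic $2$ every such incidence matrix is singular, in accordance with the standing assumption of the paper.
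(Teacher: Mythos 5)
Your proof is correct, and it is genuinely different in nature from what the paper does: the paper gives no argument at all for this lemma, but simply quotes it as Theorem~2.1 of Grossman--Kulkarni--Schochetman \cite{GKS}, a more general result that determines all minors (indeed the Smith normal form) of the incidence matrix of a graph, the nonvanishing maximal minors being $\pm 2^{c}$ with $c$ the number of components containing an odd cycle. Your route is an elementary, self-contained substitute for the special square case needed here: connectedness plus $|E(G)|=|V(G)|$ gives unicyclicity, so ``contains an odd cycle'' means ``the unique cycle $C$ has odd length''; leaf-stripping is sound (deleting a pendant vertex and its edge preserves connectedness and the equality $|V|=|E|$, the forced vanishing $\lambda_f=0$ turns the surviving vertex equations into exactly those of the smaller graph, and a connected graph with $|V|=|E|$ and minimum degree at least $2$ is $2$-regular, hence equals $C$, so the process cannot stall before reaching $C$); and the alternating-sign computation around $C$ correctly yields a nontrivial kernel vector when $\ell$ is even and $2\lambda=0$, hence $\lambda=0$, when $\ell$ is odd, which is precisely where $\chara(K)\neq 2$ enters (consistent with the paper's standing assumption $\chara(K)=0$). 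The one small caveat is your parenthetical claim that $|\det N|=2$ in the odd case: it is true, and it is part of what \cite{GKS} gives, but your kernel argument as written only establishes nonsingularity, not the value of the determinant; since the lemma only asserts nonsingularity, this does not affect correctness. In short, you trade the finer determinantal information of \cite{GKS} for a short argument independent of the reference.
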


\begin{proof}[Proof of Theorem~\ref{2-Veronese}]
	First we show that any monomial of the form $m=g/x_1\cdots x_d$ belongs to $\mathrm{Mon}_S(3,d)$, where $g=g_{i_1}\cdots g_{i_d}$ is a product of  pairwise distinct elements of $V_{2,d}$. This then yields  the inclusion $\GG(A)\subseteq K[\mathrm{Mon}_S(3,d)]$.
 Suppose that the number of elements in the support of $m$ is less than $3$. Then at least $d-2$ variables have degree $1$ in $g$. Hence $g$ can be written as a product of at most $d-1$ monomials in $A$, which is a contradiction.

Now, to prove (a) and (b),  let $m$ be an element of $\mathrm{Mon}_S(3,d)$. For $d=4$ and $d=5$ the assertions can be shown by direct computations. Let  $d>5$, and first assume that
$m=x_1\cdots x_d$.   If $d$ is odd, then let
\[g_1=x_1x_2, g_2=x_2x_3,\ldots, g_{d-1}=(x_{d-1}x_d), g_d=(x_dx_1).\]
Then the log-matrix of $g_1,\ldots,g_d$ is non-singular by Lemma~\ref{minor}.
If $d$ is even, then let
\[g_1=x_1x_2, g_2=x_2x_3,g_3=x_3x_1,g_4=x_4x_5,g_5=x_5x_6,\ldots, g_{d-1}=(x_{d-1}x_d), g_d=(x_dx_4).\]
Now,  the log-matrix  is
\[
\begin{bmatrix}
A &0\\
0&B\\
\end{bmatrix},
\]
where $A$ and $B$ are incidence matrices  of odd cycles, and so it is non-singular.

Next assume that $m\neq x_1\cdots x_d$. Without loss of generality  we may assume that $m=x_1^{r_1}\cdots x_{d-1}^{r_{d-1}}$. Since $\deg(m)=d$, there exists $i$
such that $r_i>1$. Let $u=m/x_i$. Then $u\in \Mon_{S'}(3,d-1)$, where $S'=K[x_1,\ldots,x_{d-1}]$. By induction, $(x_1\cdots x_{d-1})u=g_1\cdots g_{d-1}$ with
$g_i\in A$ and $L(g_1,\ldots, g_{d-1})$ non-singular. Let $g_d=x_ix_d$. Then $(x_1\dots x_d)m=g_1\cdots g_{d-1}g_d$. Since all the entries of the last row of
$\mathrm{Log}(g_1,\ldots,g_d)$ are zero, except the last one, which is equal to $1$, we see that $\mathrm{Log}(g_1,\ldots,g_d)$ is non-singular.

\excise{
Firstly, we consider the case that $m'$ has not a full support. In order to show that $m'$ belongs to $\GG(A)$, we find, in a recursive algorithm, $d$ linearly
independent generators $g_1,\ldots,g_d$  in $K[V_2]$ such that $m'=g_1\cdots g_d/x_1\cdots x_d$. The idea is to find $d$ elements $g_1,\ldots,g_d$ in the minimal
generating  set of $A$, such that the corresponding graph $G=G(g_1,\ldots,g_d)$,  with $\{x_1,\ldots,x_d\}$ as the set of vertices and $\{g_1,\ldots,g_d\}$ as
the set of edges, is a connected loop-less graph containing a triangle. Then the log-matrix of $g_1,\ldots,g_d$ is indeed the |dence matrix of $G$ and so it is
non-singular by  Lemma~\ref{minor}.

  We start with the case that the support of $m'$ has $d-1$ elements and assume for convenience that it is the set $\{x_1,\ldots,x_{d-1}\}$. Since $\deg(m')=d$,
  $m'=x_1\cdots x_{d-1}x_i$ for some $i$, $1\leq i\leq d-1$. We may assume that $i=1$, with out loss of generality.  Then
\[g_1=x_1x_2, g_2=x_2x_3, g_3=x_1x_3,g_4=x_1x_4,g_5:=x_4x_5,\ldots,g_d=x_{d-1}x_d,\]
has a non-singular log-matrix by Lemma~\ref{minor}.   Hence $m'=g_1\cdots g_d/x_1\cdots x_d\in \GG(A)$.

 Now, assume that the support of $m'$ has $d-i$ elements for some $i$, $2\leq i\leq d-4$; and as the inductive hypothesis assume that for any $m''\in
 \mathrm{Mon}_S(3,d)$ with $\sharp\supp(m'')=d-i+1$, there exists $g'_1,\ldots,g'_d$ in the minimal generating set of $A$ such that   $G(g'_1,\ldots,g'_d)$ is a
 connected loop-less graph containing a triangle.  Let $\supp(m')=\{x_1,\ldots,x_{d-i}\}$. Since $m'$ has degree $d$, it should be divided by $x_j^2$ for some
 $j$. Let $m''=x_{d}m'/x_j$. By inductive hypothesis, there are $g_1',\ldots,g_d'$  such that $G'=G(g'_1,\ldots,g'_d)$ is connected loop-less with a triangle,
 and $m''=g_1\cdots g_d/x_1\cdots x_d$. Since $x_{d-1}$ has degree one, it is a leaf in $G'$. Let $g'_d=x_{d-1}x_d$ be the edge of $G'$ that contains $x_d$. Now,
 by removing the edge $x_{d-1}x_d$, the graph is still connected, and by adding the edge $x_jx_d$ we increase the degree of $x_j$ by one. Now,
 \[g_1=g'_1,....,g_{d-1}=g'_{d-1},g_d=x_jx_d,\]
 provides the desired set of generators.

In order to prove (1), it remains to show that $x_1x_2x_3x_4\notin \GG(A)$. Assume by
  contradiction that $x_1x_2x_3x_4\in\GG(A)$, then $x_1^2x_2^2x_3^2x_4^2=g_1g_2g_3g_4$ for some distinct monomials $g_1,\ldots,g_4$ with non-singular log-matrix,
  in the minimal set of generators of $A$. Since each $x_i$ is of degree $2$ in $G=G(g_1,\ldots,g_4)$, the graph $G$ is indeed a cycle of length $2$ and so the
  log-matrix is singular by Lemma~\ref{minor}, contradiction.

Now, let $n\geq 5$ and  $m=x_1x_2\cdots x_d$. If $d$ is odd, then let
\[g_1=x_1x_2, g_2=x_2x_3,\ldots, g_{d-1}=(x_{d-1}x_d), g_d=(x_dx_1).\]
Then $G(g_1,\ldots,g_d)$ is an odd cycle, and so log-matrix is non-singular.
If $n$ is even, then let
\[g_1=x_1x_2, g_2=x_2x_3,g_3=x_3x_1,g_4=x_4x_5,g_5=x_5x_6,\ldots, g_{d-1}=(x_{d-1}x_d), g_d=(x_dx_4).\]
Now, $G(g_1,\ldots,g_d)$ is a disjoint union of a triangle and an odd cycle; in particular  the log-matrix  is
\[
\begin{bmatrix}
A &0\\
0&B\\
\end{bmatrix},
\]
where $A$ and $B$ are incidence matrices  of odd cycles, and so it is non-singular.
}

(c) follows from (b) by a simple counting argument.

(d): If $d\geq 5$,  it follows from (b) and Proposition~\ref{kirschtorte} that $\GG(A)$ is normal, and Cohen--Macaulay by  Hochster \cite{Ho}. On the other hand a calculation with Singular~\cite{DGPS} shows  that for $d=4$,  the $h$-vector of  $\GG(A)$ has a negative component. Therefore, in this case $\GG(A)$ is not Cohen--Macaulay.

(e): By Remark~\ref{birational}, it is suffices to show that for  every $1\leq i<j\leq n$,
\[K[\dfrac{x_ix_j}{x_rx_s} \ | \  1\leq r<s\leq n] \subset  K[\frac{u}{v}\ | \ u,v\in
\mathrm{Mon}(3,d)  ]. \]

For $1\leq i<j\leq d$ one has
\begin{eqnarray}
\label{quotient}
\dfrac{x_i}{x_j}=\dfrac{x_k x_l^{d-2}x_i}{x_k x_l^{d-2}x_j},
\end{eqnarray}
with $i,j,k,l$ pairwise distinct. Hence $(x_ix_j)/(x_rx_s)$ has an expression as in  (\ref{quotient}), if $\{i,j\}\sect \{r,s\}\neq \emptyset$. Otherwise,
\[\frac{x_ix_j}{x_rx_s}=\frac{(x_jx_s^{d-2}x_i)(x_ix_r^{d-2}x_j)}{(x_jx_s^{d-2}x_r)(x_ix_r^{d-2}x_s)}.
\]

\begin{Remark}
	(a) Let $A=K[V_{r,d}]$. We may assume that $d\geq r+2$, otherwise $\GG(A)$ is a polynomial ring.  Then
	\[
\GG(A)\subseteq\{x_1^{a_1}\cdots x_d^{a_d}\in\Mon_{S}(r+1,(r-1)d)\ : \ a_i\leq d-2 \quad \text{for}\quad 1\leq i\leq d\}.
\]
For $r=2$, the equality holds if and only if $d\geq 5$. It would be interesting to know for which  $r>2$ and $d$ the equality holds.
	
	(b) According to White's conjecture \cite{W}, the base ring of a polymatroid  is generated by the so-called exchange relations, which are quadratic binomials. Since $\Mon_{S}(3,d)$ is polymatroidal, we expect that the Gauss algebra of $K[V_{2,d}]$ has quadratic relations.
\end{Remark}

\end{proof}

		\section{Edge rings}
		
		Let $G$ be a simple graph on the vertex set $V(G)=[d]$ and edge set $E(G)=\{e_1,\ldots,e_m\}$.
		 For given subset $V\subseteq [d]$, we set $x_V=\prod_{i\in V}x_i$. In the case that $V$ is an edge $e=\{i,j\}$, we simply write $e$ instead of
$x_V=x_ix_j$.
		 The edge ideal $I(G)$, of $G$, is the  ideal generated by the monomials $e\in E(G)$.
		 Note that the log-matrix of $E(G)$ is   the incidence matrix of $G$.
		
		 Let $V\subseteq V(G)$ and $E\subseteq E(G)$ with $| V|=|  E|$. We denote by $\Delta_{V,E}$  the minor of the log-matrix $\log(E(G))$, with rows $V$ and
columns $E$.

\begin{Lemma}\label{Lemma} Let $V\subseteq V(G)$ and $E\subseteq E(G)$ with $|V|=|E|=r$, and let $\Delta_{V,E}$ be the minor of the log-matrix $\log(E(G))$, with
rows $V$ and columns $E$. 	 Suppose the edges in $E$ can be labeled as $e_1,\ldots,e_r$, such that
	\begin{eqnarray}\label{maybe}
		| V\cap(e_1\cup\cdots\cup e_i)| =i \quad \text{for} \quad 	i=1,\ldots,r.
		\end{eqnarray}
	Then  		$\Delta_{V,E}\neq 0$. The converse holds  if $G$ is a bipartite graph.
\end{Lemma}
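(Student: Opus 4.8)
I would prove the two implications separately: the forward direction is a one-line induction, while the converse carries the real content and is where bipartiteness is used.

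For the forward direction, suppose $E$ is labelled $e_1,\ldots,e_r$ with $|V\cap(e_1\cup\cdots\cup e_i)|=i$ for all $i$. I induct on $r$, expanding $\Delta_{V,E}$ along the column of the log-matrix indexed by $e_1$. Since $|V\cap e_1|=1$, say $V\cap e_1=\{v_1\}$, the other endpoint of $e_1$ lies outside $V$, so that column restricted to the rows in $V$ is the unit vector at $v_1$; hence $\Delta_{V,E}=\pm\,\Delta_{V\setminus\{v_1\},\,E\setminus\{e_1\}}$. A short case check — according to whether or not $v_1\in e_2\cup\cdots\cup e_i$ — shows that $e_2,\ldots,e_r$ is still an admissible labelling of $E\setminus\{e_1\}$ relative to $V\setminus\{v_1\}$, and induction (the base case $r=1$ being trivial) finishes.

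For the converse, assume $G$ is bipartite and $\Delta_{V,E}\neq 0$. By the Leibniz expansion of the determinant, nonvanishing forces a bijection $\sigma : V\to E$ with $v\in\sigma(v)$ for every $v$; in particular $V\subseteq W:=\bigcup_{e\in E}e$. I then pass to the connected components $(W_1,E_1),\ldots,(W_c,E_c)$ of the graph $(W,E)$ and set $V_j=V\cap W_j$. Since $\sigma$ must map $V_j$ into $E_j$, each $\sigma|_{V_j}$ is a bijection onto $E_j$, so $|V_j|=|E_j|$; after permuting rows and columns the log-matrix is block diagonal with these square blocks, and $\Delta_{V,E}=\pm\prod_j\Delta_{V_j,E_j}$ with every factor nonzero. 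Each $(W_j,E_j)$ is connected with $|E_j|=|V_j|\le|W_j|$, so $|E_j|$ equals $|W_j|-1$ or $|W_j|$: either it is a tree with exactly one vertex $b_j$ outside $V$, or it is a connected graph with as many edges as vertices and with $V_j=W_j$.

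The crux, and the only place bipartiteness enters, is that the second case cannot occur: such a $(W_j,E_j)$ is a connected loop-less bipartite graph with the same number of vertices and edges, hence contains no odd cycle, so by Lemma~\ref{minor} its incidence matrix is singular, i.e.\ $\Delta_{V_j,E_j}=0$ — a contradiction. Thus every component is a tree with a unique vertex $b_j\notin V$. Rooting $(W_j,E_j)$ at $b_j$, I list its edges $f^{(j)}_1,\ldots,f^{(j)}_{r_j}$ so that $f^{(j)}_1$ meets $b_j$ and each later edge meets the union of the preceding ones; then $f^{(j)}_1\cup\cdots\cup f^{(j)}_k$ consists of $b_j$ and exactly $k$ vertices of $V_j$. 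Concatenating these lists over $j$ produces a labelling of $E$ along which $|V\cap(e_1\cup\cdots\cup e_i)|=i$ throughout, which is (\ref{maybe}). I anticipate the main obstacle to be organizing this component bookkeeping cleanly — proving $|V_j|=|E_j|$, the tree/unicyclic dichotomy, and verifying that the concatenated order satisfies (\ref{maybe}) — while the genuinely essential input, that the components with $|E_j|=|W_j|$ are excluded, is confined to the single application of Lemma~\ref{minor}.
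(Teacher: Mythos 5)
Your proof is correct, and while the forward direction coincides with the paper's (induction on $r$, expanding along the column of $e_1$, whose restriction to the rows $V$ has a single nonzero entry), your converse is organized quite differently. The paper argues by an edge-peeling induction: if every edge of $E$ met $V$ in two vertices (or none), the square matrix would be the incidence matrix of a bipartite graph and hence singular by Lemma~\ref{minor}, so some $e_1$ meets $V$ in exactly one vertex $v_1$; deleting $e_1$ and $v_1$ leaves a nonsingular minor, and induction produces the labelling satisfying (\ref{maybe}). You instead give a global structural argument: the Leibniz expansion yields a bijection $\sigma\colon V\to E$ with $v\in\sigma(v)$, which forces $|V_j|=|E_j|$ on each connected component of $(\bigcup_{e\in E}e,\,E)$ and a block decomposition $\Delta_{V,E}=\pm\prod_j\Delta_{V_j,E_j}$; the tree/unicyclic dichotomy then reduces everything to excluding the unicyclic (bipartite, $|E_j|=|W_j|$) components via Lemma~\ref{minor}, after which the labelling is read off from each tree rooted at its unique vertex outside $V$. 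Both routes hinge on exactly the same bipartite input, but yours buys two things: it is explicitly constructive about the ordering, and the component decomposition cleanly justifies applying Lemma~\ref{minor} (which is stated only for connected graphs) in a possibly disconnected situation --- a point the paper's one-line appeal to that lemma passes over. The cost is extra bookkeeping (the bijection, the block structure, the concatenation check) where the paper's induction disposes of the whole converse in a few lines.
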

		\begin{proof}
Suppose condition~(\ref{maybe}) holds. Let $M$ be the matrix with rows $V$ and columns $E$. Let $V\cap e_1=\{v\}$. Then the first column of $M$ has only one
non-zero entry, corresponding to vertex $v$. Let $V'=V\setminus\{v\}$, then
\[|V'\cap\{e_2,\ldots,e_i\}|=i-1 \quad \text{for}\quad i=2,\ldots,r.\]
Now, by the induction hypothesis the matrix $M'$ whose rows are $V'$ and whose columns are $e_2,\ldots,e_r$, is non-singular.  It follows that $M$ is
non-singular.

Conversely, assume that $\Delta_{V,E}\neq 0$. Then we claim that there exists a column $e_1$ in $E$ such that $| V\cap e_1|=1$. Indeed, if $| V\cap e_i|>1$, for
$i=1,\ldots,r$, then $M$ is the incidence matrix of a bipartite graph. Now, Lemma~\ref{minor} implies $\Delta_{V,E}=0$, contradiction.  Let
$V'=V\setminus\{v_1\}$, where $V\cap e_1=\{v_1\}$. Then the matrix $M'$ whose rows are $V'$ and whose columns are $e_2,\ldots,e_r$, is non-singular.
Now, $|V'\cap\{e_2,\ldots,e_i\}|=i-1$ for $i=2,\ldots,r$, by induction. This implies that
$| V'\cap\{e_1,\ldots,e_i\}|=i$ for $i=2,\ldots,r$.
\end{proof}

\begin{Corollary}\label{schwarzwaelder}
	Let $G$ be a graph with $c$ connected components, and $V\subset V(G)$, with $| V|\leq d-c$. Then there exists $E\subseteq E(G)$ with $|
	E|=|V|$ such that $\Delta_{V,E}\neq 0$.
\end{Corollary}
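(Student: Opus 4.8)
The plan is to reduce everything to Lemma~\ref{Lemma}: it suffices to exhibit a set $E\subseteq E(G)$ with $|E|=|V|$ whose edges can be ordered as $e_1,\dots,e_{|V|}$ so that condition~(\ref{maybe}) holds, i.e.\ $|V\cap(e_1\cup\cdots\cup e_i)|=i$ for every $i$; then $\Delta_{V,E}\neq 0$ by the ``if'' direction of Lemma~\ref{Lemma}, which is valid for an arbitrary graph. So the matrix content is already packaged, and what remains is a purely combinatorial selection of edges.

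To build such an $E$ I would use a spanning forest. Fix a spanning forest $F$ of $G$; its trees correspond to the connected components of $G$, so $|F|=d-c$. The hypothesis $|V|\le d-c$ is the point at which one checks that no connected component of $G$ meeting $V$ is entirely contained in $V$ (this is automatic when $G$ is connected, since then $|V|\le d-1<d$), so each tree of $F$ may be rooted at a vertex outside $V$. For each $v\in V$ let $e_v\in F$ be the edge joining $v$ to its parent in the rooted tree containing $v$; distinct vertices of $V$ give distinct edges, so $E:=\{e_v:v\in V\}$ has $|E|=|V|$. Order $E$ by nondecreasing depth in $F$ of the child endpoint. When $e_v=\{v,p(v)\}$ is adjoined, $v$ is not yet an endpoint of any chosen edge (it could only have occurred earlier as the parent of one of its own children, but those sit at strictly greater depth), so $v$ is new; while if $p(v)\in V$, the edge $e_{p(v)}$ has strictly smaller depth and is already present, so $p(v)$ is not new. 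Hence each edge contributes exactly one new vertex of $V$, which is precisely~(\ref{maybe}).

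One can phrase the same idea as an induction on $|V|$: choose $v\in V$, take a shortest path inside its component from $v$ to the vertex set $V(G)\setminus V$, let $e_1$ be its last edge and $v_1\in V$ the $V$-endpoint of $e_1$ (so $|V\cap e_1|=1$), apply the inductive hypothesis to $V\setminus\{v_1\}$ to produce $e_2,\dots,e_r$ satisfying~(\ref{maybe}) for $V\setminus\{v_1\}$, and observe that $V\cap(e_1\cup\cdots\cup e_i)$ is the disjoint union of $\{v_1\}$ and $(V\setminus\{v_1\})\cap(e_2\cup\cdots\cup e_i)$, hence has $i$ elements. The step I would be most careful about is exactly the one flagged above: deducing from $|V|\le d-c$ that one can always exit $V$ within each component — equivalently, that no component meeting $V$ is swallowed by $V$ — since this is what makes the rooting (or the shortest path) available. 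Granting it, the remainder is bookkeeping with~(\ref{maybe}) and an application of Lemma~\ref{Lemma}.
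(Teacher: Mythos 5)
Your construction (root a spanning forest so that every root lies outside $V$, take for each $v\in V$ its parent edge, order by depth) is correct, and it is essentially the paper's own argument run in one pass rather than by induction: the paper peels off an edge $e_1$ with $e_1\cap V=\{v\}$, passes to $V\setminus\{v\}$, and then invokes the ``if'' direction of Lemma~\ref{Lemma}, exactly as in your second, inductive formulation. The bookkeeping verifying condition~(\ref{maybe}) is fine in both of your versions.

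The genuine gap is precisely the step you flagged and then granted: the implication ``$|V|\le d-c$ implies that no component meeting $V$ is contained in $V$'' is not merely unproved, it is false. Take $G$ to be the disjoint union of a single edge $\{a,b\}$ and a path $v_1v_2v_3v_4$; then $d=6$, $c=2$, and $V=\{a,b,v_1,v_2\}$ satisfies $|V|=4=d-c$, yet the component $\{a,b\}$ is swallowed by $V$. In this example the conclusion of the corollary itself fails: the only choice of four columns is $E=E(G)$, and the rows of $a$ and $b$ are both nonzero only in the column of the edge $ab$, so the minor vanishes. Hence no argument can close the step you deferred; what is really needed is the per-component condition you isolated (every component meeting $V$ has a vertex outside $V$), or connectedness of $G$. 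To be fair, the paper's proof makes the same silent move --- its opening claim that one can pick $r$ edges each meeting $V$, and the subsequent ``not the incidence matrix of a forest'' step, already presuppose that one can exit $V$ inside each component --- and in the only place the corollary is applied (the proof of Theorem~\ref{notneeded}(a)) the graph is connected, where your parenthetical remark applies and your argument is complete. So: right strategy and correct bookkeeping, but the delicate step you left open is exactly where both your proof and the stated hypothesis break down for disconnected $G$.
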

\begin{proof}
	Let $| V|=r$. Since $r\leq d-c$, we can choose a set $E$ of $r$ edges such that $e\cap V\neq\emptyset$ for each $e\in E$.  	Now, the matrix $M$ with rows $V$ and
	columns $E$, is not the incidence  matrix of a forest, since for a forest the number of vertices is strictly bigger than  the number of edges.
	Hence there exists an edge $e_1$ in $E$ such that $e_1\cap V=\{v\}$.
	Removing the edge $e_1$ from $G$, the number of connected components $c'$ of  $G\setminus\{e_1\}$ is at most $c+1$. Let $V'=V\setminus\{v\}$.
	Then $|V'|\leq d-c'$. By induction  there exist edges
	$e_2,\ldots,e_r$ such that $|V'\cap e_2\cup\cdots\cup e_i|=i-1$ for $i=2,\ldots,r$. It follows that $e_1,\ldots,e_r$ satisfies the condition (\ref{maybe}).
	Therefore the desired result follows from Lemma~\ref{Lemma}.
\end{proof}

	Let $G$ be a simple bipartite graph. Let $L$  {be} a non-empty subset  of $[d]$,   and let $G^L$ be the graph which is obtained from $G$  by attaching a loop to
$G$ at each vertex  belonging to $L$.  	
	For given set $T\subseteq E(G)$, let $G(T)$ denote the graph with $V(G(T))=V(G)$ and $E(G(T))=T$.

	\begin{Theorem}\label{notneeded} 	Let $G$ be a  bipartite graph with $r$ components and $L$ be a subset of $[d]$.  Let $A$ be
		the edge ring of $G^L$. Then the following statements hold.
		\begin{enumerate}
			\item[\rm (a)]  $A$ has dimension $d$ if  and only if $L$  contains  at least one vertex of each component of $G$.
			\item[(b)] If the condition (a)   {is} satisfied, then
			the Gauss algebra $\GG(A)$ is generated  by the monomials
			\[
			g_{V,T}= x_{V}\frac{e_{T}}{x_{V^c}},
			\]
			where $V$ is a non-empty subset of $L$,   $V^c=[d]\setminus V$, and $e_T=\prod_{e\in T}e$ where $T\subseteq E(G)$ satisfies
						\begin{enumerate}
			 \item[(i)] $G(T)$ is a  forest {, which may have isolated vertices as some of its connected components};	
			 		 \item[(ii)]  each connected component of $G(T)$ contains exactly one vertex of $V$.
			\end{enumerate}
		\medskip
In particular, when $|V|=1$, then  the cardinality of the minimal set of generators of $\GG(A)$  is bounded by the number of spanning trees of $G$. Moreover, $g_{V,T}=g_{V,T'}$ if and only if each vertex of $G$ has the same degree in $T$ and $T'$.
		\end{enumerate}
	\end{Theorem}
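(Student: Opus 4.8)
The overall strategy is to determine, for each set of $d$ generators of $A=K[G^L]$, when the associated log-matrix is nonsingular, and then to read off the corresponding generator of $\GG(A)$ from the description $\GG(A)=K[(g_{i_1}\cdots g_{i_d})/(x_1\cdots x_d):\det\mathrm{Log}(g_{i_1},\ldots,g_{i_d})\neq 0]$ given in Section~1. For part (a), I would split the log-matrix of $G^L$ into its vertex-blocks, one per connected component $C$ of $G$; after permuting rows and columns it is block diagonal, since every edge of $G$ and every loop of $G^L$ is supported on a single component. Within the block of $C$, the edge columns $x_i+x_j$ span the hyperplane of $K^{V(C)}$ cut out by the vanishing of the alternating sum over the two colour classes of $C$ (here one uses that $C$ is connected and bipartite), so they have rank $|V(C)|-1$; since $e_i$ does not lie in this hyperplane, adjoining a loop column $2e_i$ with $i\in L\cap V(C)$ raises the rank to $|V(C)|$, and further loops cannot raise it beyond that. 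Hence the log-matrix has rank $d-\#\{\,C:\ L\cap V(C)=\emptyset\,\}$, which equals $d$ precisely when $L$ meets every component of $G$; this proves (a) and shows $\GG(A)$ is defined exactly under the hypothesis of (b).

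For part (b), a choice of $d$ distinct generators of $A$ is the same as a pair $(V,T)$ with $V\subseteq L$, $T\subseteq E(G)$ and $|V|+|T|=d$, the generators being the loops $x_i^2$ ($i\in V$) and the edges $x_ax_b$ ($\{a,b\}\in T$). Pulling the scalar $2$ out of each loop column and then performing a Laplace expansion along the loop columns (each having a single nonzero entry, in the row of its vertex) gives $\det\mathrm{Log}(\ldots)=\pm 2^{|V|}\,\Delta_{V^c,T}$, where $\Delta_{V^c,T}$ is the minor of the incidence matrix of $G$ on rows $V^c=[d]\setminus V$ and columns $T$, and $|V^c|=|T|$. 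So the chosen subset is admissible if and only if $\Delta_{V^c,T}\neq 0$, and by Lemma~\ref{Lemma} (using that $G$ is bipartite) this holds if and only if $T$ admits an ordering $e_1,\ldots,e_{|T|}$ with $|V^c\cap(e_1\cup\cdots\cup e_i)|=i$ for every $i$.

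The crux is to show that this ordering condition is equivalent to conditions (i) and (ii). For the forward implication: an edge that closes a cycle with its predecessors would bring no new vertex at all, contradicting the ordering, so $G(T)$ is a forest; then $|T|=|V^c|$ together with the ordering forces $\bigcup_{e\in T}e\supseteq V^c$, hence every isolated vertex of $G(T)$ lies in $V$; and the first edge occurring inside each tree component of $G(T)$ introduces two new vertices, exactly one of which lies in $V^c$, so the other lies in $V$; since $|V|$ equals the number of components of $G(T)$ (because $|V|+|T|=d$ and $G(T)$ is a spanning forest), each component contains exactly one vertex of $V$. For the converse: root each tree component of $G(T)$ at its unique vertex of $V$ and list its edges in a breadth-first order; then every edge contributes one new vertex, which cannot be the root and hence lies in $V^c$, so the ordering condition holds. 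Reading off the monomial, the loops contribute $\prod_{i\in V}x_i^2=x_V^2$ and the edges contribute $e_T=\prod_i x_i^{\deg_{G(T)}(i)}$, so dividing by $x_1\cdots x_d$ yields $g_{V,T}=x_V e_T/x_{V^c}$, a genuine monomial since $V^c\subseteq\bigcup_{e\in T}e$; and $V\neq\emptyset$ is automatic because $G(T)$ has at least one component.

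For the closing clauses, the identity $e_T=\prod_i x_i^{\deg_{G(T)}(i)}$ gives $g_{V,T}=\prod_{i\in V}x_i^{1+\deg_{G(T)}(i)}\prod_{i\in V^c}x_i^{\deg_{G(T)}(i)-1}$, so for fixed $V$ the monomial $g_{V,T}$ depends only on the degree sequence of $G(T)$, and comparing exponents shows $g_{V,T}=g_{V,T'}$ exactly when $G(T)$ and $G(T')$ have the same degree at every vertex; since all generators of $\GG(A)$ have the same degree $d$, none is redundant, and so the number of minimal generators of $\GG(A)$ equals the number of distinct $g_{V,T}$. When $|L|=1$ we are forced to take $V=L$, so $|V|=1$ and condition (ii) says $G(T)$ is connected, i.e.\ $T$ is the edge set of a spanning tree of $G$; hence the number of minimal generators is at most the number of spanning trees of $G$. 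The one genuinely substantive step is the combinatorial translation carried out in the third paragraph; the rest is linear algebra and bookkeeping.
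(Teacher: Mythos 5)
Your proposal is correct and follows essentially the same route as the paper: you expand the determinant along the loop columns to reduce nonsingularity to the incidence minor $\Delta_{V^c,T}$, apply Lemma~\ref{Lemma} (with its bipartite converse), and translate the resulting ordering condition into conditions (i) and (ii) by the same spanning-forest and rooted-component counting. The only cosmetic differences are in part (a), where you compute the rank of each bipartite block via the colour-class hyperplane plus one loop column instead of invoking Lemma~\ref{minor} and Corollary~\ref{schwarzwaelder}, and in your spelling out of the ``moreover'' clause and the minimality of the generating set, which the paper leaves implicit.
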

	\begin{proof}
				(a).  {As $G$ is a bipartite graph, the log-matrix of $G$ is singular by Lemma~\ref{minor}.  }
				We show that the log-matrix of $G^L$ has a non-singular maximal minor, if and only if $L$  contains  at least one vertex of each component of $G$. Let
		$L_i=G_i\cap L$, then $G_1^{L_1},\ldots,G_r^{L_r}$ are the connected components of $G^L$, and the log-matrix of $G^L$ has maximal rank if and only if the
		log-matrix of each $G_i^{L_i}$ has maximal rank.  {Therefore, it is enough to show that the log-matrix of a connected graph $G$, with at least one loop, is non-singular. Assume that there is a loop at vertex $1$. Then the $1$st column has only one non-zero entry at $1$st row.
		Let $A$ denote the log-matrix of  $G$ and $|V(G)|=n$.  In order to compute the rank  of $A$, we may skip the $1$st row and the $1$st column, obtaining a new matrix $A_1$, which has maximal rank $n-1$, by  Corollary~\ref{schwarzwaelder}. Therefore the rank of $A$ is equal to $n$. }
		
			\medskip
		
		(b).
		We first show that the conditions (i),(ii) are equivalent to
			\begin{enumerate}
			\item[($\alpha$)]  $|T|=|V^c|$;
			\item[($\beta$)] the elements of $T$ can be labeled as $e_1,\ldots,e_m$   such that
			\[|V^c\cap(e_1\cup\cdots\cup e_i)|=i \quad  \text{for} \quad 	i=1,\ldots,m.\]
		\end{enumerate}
Suppose that (i) and (ii) are satisfied. If $T=\emptyset$, then the equivalence of (i),(ii) with $\alpha,\beta$ is trivial. Now, assume that $T\neq\emptyset$, and let  $G(T)_1,\ldots,G(T)_t$ be  the connected components of $G(T)$ with $|V(G(T)_i)|\geq 2$   and $v_i$ be the vertex of $V$ belonging to $G(T)_i$.  Since $G(T)_i$ is a tree, we may label the edges of $G(T)_i$ as $e_{i_1},\ldots,e_{i_{s_i}}$ such that $v_i\in e_{i_1}$ and $|e_{i_j}\cap(e_{i_1}\cup\cdots\cup e_{i_{j-1}})|=1$ for all $j=1,\ldots,s_i$. Then the sequence of edges
		\[e_{1_1},\ldots,e_{1_{s_1}},e_{2_1},\ldots,e_{2_{s_2}},e_{3_1},\ldots\]
		satisfies conditions ($\alpha$),($\beta$).
		
		Conversely, condition $(\beta)$ guarantees  that $G(T)$ does not contain any  cycle, and so it  is a forest, which by $(\alpha)$ has $d-|V|$ edges. Therefore $|V|$ is equal to the number of connected components of $G(T)$. let  $G(T)_1,\ldots,G(T)_t$ be  the connected components of $G(T)$ with $|V(G(T)_i)|\geq 2$, and let  $e_{i_j}$ be the first edge, with respect to the labeling in ($\beta$), such that $e_{i_j}\cap G(T)_j\neq\emptyset$. Then $|e_{i_j}\cap V|=1$, so each connected component of $G(T)$ contains at least one vertex in $V$. Since $G(T)$ has $|V|$ number of components, each component should contain exactly one element of $V$.

		\medskip

Let $g$ belong to the minimal set of generators of $\GG(A)$. Then $g=\frac{g_1\cdots g_d}{x_1\cdots x_d}$, where $g_i$ is a monomial generator of $A$ and the log-matrix of $g_1,\ldots,g_d$ is non-singular.  Since the incidence matrix of a bipartite graph is singular by Lemma~\ref{minor}, at least one $g_i$ corresponds to a loop. After relabeling, we may assume that $\{g_1,\ldots,g_d\}=\{x_1^2,\ldots,x_s^2,e_1,\ldots,e_{d-s}\}$ and
	\[\ \log(x_1^2\cdots x_s^2e_{1}\cdots e_{{d-s}})=
\begin{bmatrix}
2&0&\cdots&0&a_{1,1}&\cdots&a_{1,d-s}\\
0&2&&0&a_{2,1}&\cdots&a_{2,d-s}\\
\vdots&&\ddots&\vdots&\vdots&\vdots\\
0&0&\cdots&2&a_{s,1}&\cdots&a_{s,d-s}\\
0&0&\cdots&0&a_{s+1,1}&\cdots&a_{s+1,d-s}\\
\vdots&\vdots&\vdots&\vdots&\vdots&\vdots&\vdots\\
0&0&0&0&a_{d,1}&\cdots&a_{d,d-s}
\end{bmatrix},
\]
where $A=[a_{r,t}]$ is the log-matrix of $e_{1}\cdots e_{{d-s}}$.
Let $V=\{1,\ldots,s\}$ and $T=\{e_1,\ldots,e_{d-s}\}$.  	Then \[g=g_{V,T}=\frac{x_1^2\cdots x_s^2e_{1}\cdots e_{{d-s}}}{x_1\cdots
	x_d}=x_1\cdots x_s\frac{e_{1}\cdots e_{{d-s}}}{x_{s+1}\cdots x_{d}},\] and
the log-matrix is non-singular if and only if the submatrix $A'$ with  rows
 $s+1,\ldots,d$ and  columns $s+1,\ldots, d$, is non-singular, and by Lemma~\ref{Lemma}, $A'$ is non-singular if  and only if the condition  ($\beta$)
 is satisfied.		
\end{proof}

\begin{Example}
Let $G$ be a path graph  with $d$ vertices,  and edges $\{1,2\},\{2,3\},\ldots,\{d-1,d\}$.
Let $L\subseteq [d]$ and $A$ be the edge ring of $G^L$.  {The induced subgraph by any set $T\subset E(G)$,  can be considered as a disjoint union of intervals. Since $G$ is a path graph, $G(T)$ is a forest. By Theorem \ref{notneeded}, the product of edges in $T$ should be divisible by all vertices in $[d]\setminus V$. Therefore $G(T)$, covers $[d]$. In other words,
the generators of $\GG(A)$ correspond to  interval partitions of  $[d]$ with the property that each interval contains exactly one element of $V$. Let  $[d]=\cup^r_{i=1}[a_i,b_i]$ with $[a_i,b_i]\sect[a_j,b_j]=\emptyset$ for all $i\neq j$,  and with   $|V\cap[a_i,b_i]|=1$ for $i=1,\ldots,r$. The  corresponding generator of $\GG(A)$, is
 \begin{eqnarray}\label{easy}
	  (\prod^r_{i=1}\prod_{j\in ]a_i,b_i[ }x_j)\prod_{j\in L}x_j^{c_j},
\end{eqnarray}
where $c_j=2$, if $j$ belongs to a proper interval, and is $c_j=1$,  otherwise.
Here   $[a_i,b_i]$ is said to be proper if $b_i-a_i>0$. }

The above discussions show that if $L=[d]$, then the number of generators of $\GG(A)$ is the number  {
\[
\lambda_d=\sum^{d}_{r=1}\sum_{a\in P_r}\prod_{i=1}^r(a_{i+1}-a_i),
\]	
 where $P_r=\{a=(a_0,\ldots,a_r) : 0=a_0<a_1< a_2<\cdots <a_r=d)\}$, for $r=1,\ldots,d$. }
%	$\lambda_d=\sum\prod_{i=1}^r(a_{i+1}-a_i)$, where the sum is taken over  	$r=1,\ldots,d$, 	and for each $r$, over all sequences 	 $0=a_1<a_2<\cdots< a_r=d$.
   The sequence $(\lambda_d)_{d\geq 1}$ begins as follows \[1,3,8,21,55,144,377,\ldots.\] The recursive formula $\lambda_d=3\lambda_{d-1}-\lambda_{d-2}$, describes the beginning of the sequence. This  seems to be the rule for the whole sequence $(\lambda_d)_{d\geq 1}$.

\medskip	
In the case that  $L=\{i<j\}$,
$\GG(A)$ is generated by $j-i+2$ monomials
\[
x^2_ix_2\cdots x_{d-1} \ ,\ x_j^2x_2\cdots x_{d-1} \ , \ 	(x_i^2x_j^2x_2\cdots x_{d-1})/x_{k}x_{k+1} \quad \text{for} \quad i\leq k\leq j-1.
\]
 An easy calculation shows that the log-matrix of $\GG(A)$ has rank $j-i+1$. In particular $\GG(A)$ is a hypersurface ring. It can be shown that the multiplicity of $\GG(A)$ is $j-i$. When $i=1,j=d$, the defining equation of $\GG(A)$ is
\[f=\left\{\begin{array}{ll}
\prod^{d/2}_{i=1}y^2_{2i}-y_1y_{d+1}\prod_{i=1}^{d/2-1}y^2_{2i+1}, & \text{ if } d \text{ is even; }\\ \\
y_1\prod^{(d-1)/2}_{i=1}y^2_{2i+1}-y_{d+1}\prod_{i=1}^{(d-1)/2}y^2_{2i+1}, & \text{ if } d \text{ is odd, }\end{array}
\right.\]	
and if $j=i+1$, then the defining equation is quadratic.
By computing the singular locus, we see that $\GG(A)$ is normal if and only if $d=2$.
\end{Example}
	
\begin{Example}
	Let $G$ be a cycle with $d$ vertices,  and edges $\{1,2\},\{2,3\},\ldots,\{d-1,d\},\{1,d\}$. Let $L=\{1\}$ and $A$ be the edge ring of $G^L$.
	When $d$ is even, the	spanning trees of $G$  correspond to the generators of $\GG(A)$.
	Each spanning tree of $G$ is obtained by removing one edge from $G$, and so the  generators of $\GG(A)$ are
	\begin{eqnarray}\label{easy-cycle}
x_1^2\prod^d_{i=3}x_i \ , \ x_1^2\prod^{d-1}_{i=2}x_i \ , \
x_1^3\frac{\prod^d_{i=2}x_i}{x_jx_{j+1}} \quad \text{for} \quad j=2,\ldots,d-1.
	\end{eqnarray}
	 When $d$ is odd, in addition to the monomials in (\ref{easy-cycle}), $\GG(A)$ has one more generator, namely $x_1\cdots x_d$.
		For even $d$, $\dim(\GG(A))=d-1$, and for odd $d$, $\dim(\GG(A))=d$. Hence in both cases  $\GG(A)$ is a hypersurface ring with defining equation
	 \[f=\left\{\begin{array}{ll}
	 y_{d-1}\prod^{d/2-1}_{i=1}y_{2i}-y_{d}\prod_{i=1}^{d/2-1}y_{2i-1}, & \text{ if } d \text{ is even; }\\ \\
	 y_d\prod^{(d-1)/2}_{i=1}y_{2i}-y_{d+1}\prod_{i=1}^{(d-1)/2}y_{2i-1}, & \text{ if } d \text{ is odd. }\end{array}
	 \right.\]	
The initial monomial of $f$ (with respect to  any monomial order) is squarefree. Therefore,   $\GG(A)$ is normal.
\end{Example}

	\begin{Remark}
 Let $G$ be a bipartite  graph on $[d]$, $L=\{i\}$  and $A$ be the edge ring of $G^L$.
The above examples and computational evidence indicate that 	$\GG(A)$ is a hypersurface ring of dimension $d-1$ if and only if $G$ is an even cycle.
\end{Remark}
\begin{Example}\label{bipartite}
Let $G=K_{n,m}$ be a complete bipartite graph with partition sets $X=\{x_1,\ldots,x_n\}$ and $Y=\{y_1,\ldots,y_m\}$. Let  $A$ be the edge ring of $G$ with one loop at vertex $x_1$.
Then
\[\GG(A)=K[x_1^2(x_1,\ldots,x_n)^{m-1}(y_1,\ldots,y_m)^{n-1}].\]
Indeed,  any generator of $\GG(A)$ can be written as $x_1^2(e_1\cdots e_{n+m})/x_1\cdots x_ny_1\cdots y_m$, where $e_j$ is an edge of $G$, which follows  $\GG(A)\subseteq  K[x_1^2(x_1,\ldots,x_n)^{m-1}(y_1,\ldots,y_m)^{n-1}]$.

Let $f$ be a monomial in the generating set of  $(x_1,\ldots,x_n)^{m-1}(y_1,\ldots,y_m)^{n-1}$. Then  $f=x_{i_1}\cdots x_{i_{m-1}}y_{j_1}\cdots y_{j_{n-1}}$, for some $1\leq i_1\leq \cdots\leq i_{m-1}\leq n$ and $1\leq j_1\leq \cdots \leq j_{n-1}\leq m$. Now, let $T$ be the subgraph of $G$ with $V(T)=V(G)$ and $E(T)$ equal to
\[
\{e_1=x_1y_{j_1},\ldots, e_{n-1}=x_{n-1}y_{j_{n-1}},e_{n}=y_1x_{i_1},\ldots,e_{n+m-2}=y_{m-1}x_{i_{m-1}}, e_{n+m-1}=x_ny_n\}.
\]
Then $T$  is a spanning tree of $G$, which implies that
 \[
x_1^2f=x_1^2e_1\cdots e_{n+m-1}/x_1\cdots x_ny_1\cdots y_m
\]
 is a generator of $\GG(A)$.

As a consequence,  the embedding dimension of $\GG(A)$  is $\binom{m+n-2}{n-1}\binom{m+n-2}{m-1}$. However, the number of spanning tress of $G$ is $n^{m-1}m^{n-1}$, see \cite[Theorem~1]{HW}. Therefore, among the spanning trees of $G$, many of them correspond to the same generator in $\GG(A)$.
\end{Example}


\begin{thebibliography}{99}
		
		\bibitem{BGS}
		P. Brumatti, P. Gimenez\ and\ A. Simis, On the Gauss algebra associated to a rational map $\PP^d\to \PP^n$, J. Algebra {\bf 207} (1998), no.~2, 557--571.
		
		
		\bibitem{DGPS}
		Decker, W.; Greuel, G.-M.; Pfister, G.; Sch{\"o}nemann, H.:
		\newblock {\sc Singular} {4-1-1} --- {A} computer algebra system for polynomial computations.
		\newblock {http://www.singular.uni-kl.de} (2018).
		
		\bibitem{dN}
		E. De Negri, Toric rings generated by special stable sets of monomials, Math. Nachr. {\bf 203} (1999), 31--45.
		
		\bibitem{GH}
		P. Griffiths\ and\ J. Harris, Algebraic geometry and local differential geometry, Ann. Sci. \'Ecole Norm. Sup. (4) {\bf 12} (1979), no.~3, 355--452.
		
		\bibitem{GKS}
		J. W. Grossman, D. M. Kulkarni\ and\ I. E. Schochetman, On the minors of an incidence matrix and its Smith normal form, Linear Algebra Appl. {\bf 218} (1995), 213--224.
		
		\bibitem{HW}
		N. Hartsfield\ and\ J. S. Werth, Spanning trees of the complete bipartite graph, in {\it Topics in combinatorics and graph theory (Oberwolfach, 1990)}, 339--346, Physica, Heidelberg.
		
		
		\bibitem{HH}
	J. Herzog\ and\ T. Hibi, Discrete polymatroids, J. Algebraic Combin. {\bf 16} (2002), 239--268.
		
\bibitem{Ho} M. Hochster, Rings of invariants of tori, Cohen-Macaulay rings generated by monomials, and polytops. Annals of Math. {\bf 96} (1972), 318--337.


		
			
\bibitem{Simis-1998} A. Simis, On the Jacobian module associated to a graph, Proc. Amer. Math. Soc. {\bf 126} (1998), no.~4, 989--997.
			
		\bibitem{Aron1}
		A. Simis, Two differential themes in characteristic zero, C. Melles, J.-P. Brasselet, G. Kennedy, K. Lauter, L. McEwan (Eds.), Topics in Algebraic and Noncommutative Geometry, Proceedings in Memory of Ruth Michler, Contemp. Math., vol. 324, Amer. Math. Soc., Providence, RI (2003),  195--204.

		
		\bibitem{AKB}
		A. Simis, K. E. Smith\ and\ B. Ulrich, An algebraic proof of Zak's inequality for the dimension of the Gauss image, Math. Z. {\bf 241} (2002), no.~4, 871--881.
		
		
		\bibitem{W}
	N. L. White, A unique exchange property for bases, Linear Algebra Appl. {\bf 31} (1980), 81--91.
				
			
\bibitem{Zak}
		F. L. Zak, {\it Tangents and secants of algebraic varieties}, translated from the Russian manuscript by the author, Translations of Mathematical Monographs, 127, American Mathematical Society, Providence, RI, 1993. 		
	


\end{thebibliography}
\end{document}